\documentclass[a4paper,12pt]{article}
\usepackage{amssymb}
\usepackage{bm}
\usepackage{amsmath,amsthm}
\usepackage{amscd}
\usepackage[arrow,matrix]{xy}
\usepackage[bookmarksnumbered,colorlinks,linkcolor=black,citecolor=black,urlcolor=black]{hyperref}
\usepackage{mathtools}
\usepackage{indentfirst} % YY to indent in the beginning of every section
\usepackage{mathrsfs}

\usepackage{tikz-cd}
\usepackage{tikz}
\usepackage{enumitem}

\newtheorem{theo}{Theorem}[section]
\newtheorem{prop}[theo]{Proposition}
\newtheorem{lem}[theo]{Lemma}
\newtheorem{cor}[theo]{Corollary}
\newtheorem{conj}[theo]{Conjecture}

\newtheorem{theon}{Theorem}
\newtheorem{corn}[theon]{Corollary}
\newtheorem*{defiu}{Definition}

\theoremstyle{definition}
\newtheorem{defi}[theo]{Definition}
\newtheorem{rem}[theo]{Remark}
\newtheorem{exa}[theo]{Example}
\newtheorem{ques}[theo]{Question}
\newtheorem{exe}[theo]{Exercise}

\newcommand{\bthe}{\begin{theo}}
\newcommand{\ble}{\begin{lem}}
\newcommand{\bpr}{\begin{prop}}
\newcommand{\bco}{\begin{cor}}
\newcommand{\bde}{\begin{defi}}
\newcommand{\ethe}{\end{theo}}
\newcommand{\ele}{\end{lem}}
\newcommand{\epr}{\end{prop}}
\newcommand{\eco}{\end{cor}}
\newcommand{\ede}{\end{defi}}
\newcommand{\brem}{\begin{rem}}
\newcommand{\erem}{\end{rem}}

\newcommand{\bexe}{\begin{exe}}
\newcommand{\eexe}{\end{exe}}

\newcommand{\bexa}{\begin{exa}}
\newcommand{\eexa}{\end{exa}}

\newcommand{\bconj}{\begin{conj}}
\newcommand{\econj}{\end{conj}}
\newcommand{\bques}{\begin{ques}}
\newcommand{\eques}{\end{ques}}

\def \nr {{\rm nr}}

\def \Br {{\rm{Br}}}

\def \si {{\sigma}}

\def \R {{\mathbb{R}}}
\def \Pic {{\rm {Pic}}}

\def \Gal {{\rm{Gal}}}

\def \A{{\mathbb A}}
\def \P{{\mathbb P}}

\def \Spec {{\rm{Spec}}}
\def \dim {{\rm{dim}}}
\def \Hom {{\rm {Hom}}}
\def \End {{\rm {End}}}
\def \Pic {{\rm {Pic}}}

\def \cris {{\rm cris}}

\def\ov{\overline}

\def \Z {{\mathbb Z}}
\def \Q {{\mathbb Q}}
\def \F {{\mathbb F}}

\def \rk {{\rm{rk}}}

\def \H {{\rm H}}
\def\G{{\mathbb G}}

\def\lra{\longrightarrow}

\def \s {{\rm s}}

\def\X{{\cal X}}

\def\inv{{\rm inv}}
\def\Kum{{\rm Kum}}

\def\NS{{\rm NS}}

\def\O{{\cal O}}

\def\si{\sigma}

\def\res{{\rm res}}

\def\ev{{\rm ev}}

\def\et{{\rm{\acute et}}}

\newcommand{\isomto}{\overset{\cong}{\longrightarrow}}

\usepackage[textheight=220mm,textwidth=150mm]{geometry}

\title{Dichotomy for the Brauer--Manin obstruction in characteristic $p$}
\author{Christopher D. Lazda and Alexei N. Skorobogatov}

\date{\today}
\begin{document} 
\maketitle

\begin{abstract} 
Let $X$ be a smooth, projective, geometrically integral variety over a global function field $k$ 
of characteristic $p$. 
We show that if the unipotent Brauer group of $X$ is zero, and the Picard scheme of $X$
 is torsion free,
then only finitely many places of $k$ can be potentially relevant to the Brauer--Manin obstruction for $X$. In the opposite case, almost all places of $k$ are potentially relevant. If $X$ is base changed from a finite field, then in both cases the finite set of exceptional places is empty.
We give an example of a supersingular K3 surface over a global function field $k$ of 
characteristic $p>2$
such that $\Br(X)$ is finite modulo $\Br(k)$, answering a question from our previous paper.
\end{abstract}

\section*{Introduction}

Let $X$ be a variety over a global field $k$. A place $v$ of $k$ is called {\em relevant}
to the Brauer--Manin obstruction on $X$ if there is a Brauer class $\mathcal{A}\in \Br(X)$
such that $\mathcal{A}(P)\neq \mathcal{A}(Q)$ for some $k_v$-points $P,Q\in X(k_v)$.
A place $v$ of $k$ is called {\em potentially relevant} if there is a finite field extension $L/k$
and a place of $L$ above $v$ that is relevant to the Brauer--Manin obstruction on $X\times_kL$. Note that in positive characteristic we allow the extension $L$ to be inseparable.

In their recent papers, Bright and Newton \cite[Theorem C]{BN} 
and  Ambrosi, Newton, and Pagano \cite[Corollary 1.1.3]{ANP} proved that if 
$X$ is a smooth, proper, geometrically integral
variety over a number field $k$, such that $\H^2(X,\O_X)\neq 0$, 
then almost every place $v$ of $k$ is potentially relevant to the Brauer--Manin obstruction. 

Now let $X$ be a smooth, projective, geometrically integral variety over a global field $k$ of 
{\em positive} characteristic. One can ask the following natural question: for which
varieties $X$ are almost all
places of $k$ potentially relevant to the Brauer--Manin obstruction? 

In this paper we give a complete answer to this question.
Let $\bar k$ be an algebraic closure of $k$. 
By work of Milne, the quotient of $\Br(X_{\bar k})[p^\infty]$ by its divisible subgroup is the group of 
$\bar k$-points of a quasi-algebraic group $G_X$ in the sense of Serre \cite{Serre60},
see \cite[\S 2]{Ber}. By \cite[Proposition 7, p.~10]{Serre60}, $G_X$ is 
an extension of a finite \'etale group $D_{X}$ by a smooth connected unipotent group $U_{X}$.

\begin{defiu} 
We call $U_X$ the unipotent Brauer group of $X$.
\end{defiu}

For example, if $X$ is a K3 surface or an abelian surface, 
then $U_X\neq 0$ if and only if $X$ is supersingular,
in which case $U_X$ is the quasi-algebraic group $\G_a$ and $\Br(X_{\bar k})\cong \bar k$. If $A$ is an abelian variety, then $U_A=0$ if and only if $A$ is ordinary or almost ordinary \cite[Remark 3.2]{GSY}.
In general, there is an explicit criterion for the non-vanishing of $U_X$ in terms of the slopes of the crystalline cohomology groups of $X_{\bar k}$, see Lemma \ref{lemma1} below.

Let ${\bf A}_k$ be the ad\`ele ring of $k$, and $X({\bf A}_k)^\Br$ the Brauer--Manin set of $X$.
The second named author asked about the role played by the unipotent Brauer group 
in the Brauer--Manin obstruction for $X$. In response to this question, Valloni has shown that
if $X$ is a supersingular K3 surface, then up to replacing $k$
by a finite inseparable extension, the Brauer--Manin set $X({\bf A}_k)^\Br$
is `very small'; in particular it is not open in $X({\bf A}_k)$, and every place of $k$ is relevant
to the Brauer--Manin obstruction \cite[Theorem 1.3]{V}. In contrast, if $X$ is a 
non-supersingular K3 surface, then $\Br(X_{\bar k})$ is divisible, 
and $\Br(X)$ is finite modulo the image of $\Br(k)$
by \cite{SZ15, Ito18, LQ24, LS24}, implying that $X({\bf A}_k)^\Br$ is open in $X({\bf A}_k)$.
In particular, only finitely many places of $k$ are relevant to the Brauer--Manin obstruction on $X$.
Note, however, that this description leaves open the question about potential relevance.

We show that Valloni's theorem reflects a basic dichotomy in the behaviour of the Brauer--Manin obstruction. Write ${\bf Pic}_{X/k}$ for the Picard scheme of $X$. Our main result is as follows. 

\begin{theon} \label{theo: main}
Let $X$ be a smooth, projective, geometrically integral variety over a global function field $k$ of characteristic $p>0$. Let $\ell$ be a prime number.
\begin{enumerate}[label=\normalfont(\roman*)]
\item \label{Ai} Let $\ell=p$. If both $U_X=0$ and ${\bf Pic}_{X/k}[p]= 0$, then only finitely many places of $k$ are potentially relevant to the $p$-primary Brauer--Manin obstruction for $X$. Otherwise, almost all places are potentially relevant to the $p$-primary Brauer--Manin obstruction for $X$.
\item \label{Aii} Let $\ell\neq p$. If ${\bf Pic}_{X/k}[\ell]=0$, then only finitely many places of $k$ are potentially relevant to the $\ell$-primary Brauer--Manin obstruction for $X$. Otherwise, almost all places are potentially relevant to the $\ell$-primary Brauer--Manin obstruction for $X$.
\end{enumerate}
\end{theon}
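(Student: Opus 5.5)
The plan is to reduce both parts to local statements at places of good reduction. Spread $X$ out to a smooth projective model $\mathcal{X}$ over a dense open $S$ of the smooth projective curve with function field $k$. For $v\in S$, write $\mathcal{O}_v$ for the completed local ring and $\kappa_v$ for the residue field. The basic tool is the standard fact that a Brauer class $\mathcal{A}$ on $X_L$ is constant on $X(L_w)$ whenever, after subtracting a constant class, $\mathcal{A}$ extends to $\mathcal{X}_{\mathcal{O}_w}$. This holds because $X(L_w)=\mathcal{X}(\mathcal{O}_w)$ by properness and $\Br(\mathcal{O}_w)=0$.

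\textbf{The ``finitely many'' direction.} Assume ${\bf Pic}_{X/k}[\ell]=0$, together with $U_X=0$ when $\ell=p$. We show that every $\ell$-primary class on $X_L$ is unramified at every place $w$ above $S$, up to adding a constant class, uniformly over all finite (possibly inseparable) $L/k$.

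\emph{Case $\ell\neq p$.} Residues of $\mathcal{A}$ along the special fibre lie in $\H^1(\mathcal{X}_{\kappa_w},\Q_\ell/\Z_\ell)$. This group sits in an exact sequence with $\H^1(\kappa_w,\Q_\ell/\Z_\ell)$ and $\Hom(\pi_1^{\rm ab},\Q_\ell/\Z_\ell)$ of the geometric special fibre. By smooth base change, the latter is controlled by ${\bf Pic}[\ell^\infty]$ of the geometric generic fibre, which vanishes. So the residue comes from $\kappa_w$, and it can be killed by subtracting a constant class from $\Br(L_w)$.

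\emph{Case $\ell=p$.} We use the logarithmic de Rham--Witt / syntomic description of $p$-primary Brauer classes and their ``Swan conductors'' (Kato's refined residues). The hypotheses should force the relevant groups $\H^1_{\rm fl}(\mathcal{X}_{\kappa},\mu_p)$ and the $\H^0(\Omega^1)$-type contributions to be trivial. We expect that ${\bf Pic}[p]=0$ gives that ${\bf Pic}^0$ has no $p$-torsion and that $\H^0(\Omega^1)$ has no nonzero closed logarithmic forms. We also expect that $U_X=0$, via Lemma \ref{lemma1}'s slope criterion, gives control of $\H^2(W\mathcal{O})$-type contributions, making $\Br(\mathcal{X}_{\bar\kappa})[p^\infty]$ divisible. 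This is the step I expect to be hardest. It requires showing that evaluation maps $X(L_w)\to\Br(L_w)$ are constant for all $p$-primary classes, including wildly ramified ones, and inseparable $L$ makes the local fields imperfect-residue-free but highly ramified over $k_v$. I would first try to prove that the evaluation map factors through reduction modulo a bounded power of the uniformizer, with the bound controlled by the conductor. I would then show that the conductor must vanish, using that its symbol would produce a nonzero element of $U_X(\bar\kappa)$ or of ${\bf Pic}[p]$.

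\textbf{The ``almost all'' direction.} Suppose ${\bf Pic}_{X/k}[\ell]\neq0$, or $U_X\neq0$ when $\ell=p$. For ${\bf Pic}[\ell]\neq 0$, after a finite extension we get a $\mu_\ell$-torsor, or an $\alpha_p$/$\mu_p$-torsor when $\ell=p$, $Y\to X$ defined over $L$. Cup-producing its class with a suitable element $f\in L^\times$ gives a Brauer class. At a place $w$ of good reduction where $f$ is a uniformizer, evaluation at two points whose reductions lie in different classes of the torsor gives different values, by the usual tame symbol computation; when $\ell=p$ we use the Artin--Schreier--Witt symbol instead. By Chebotarev, or simply by choosing $f$ and $L$ per place, this works for almost all $v$. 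For $U_X\neq 0$, we follow Valloni: pass to an inseparable extension $L=k^{1/p^n}$ and use the $\G_a$-family of classes in $\Br(X_{\bar k})$, which descend to $X_L$. For each good place we produce a class whose evaluation, computed through the map $\Omega^1$/$\G_a\to\Br$ (the symbol $[a\,db/b)$), varies with the point's reduction. Nonconstancy follows because the class of $\G_a(\bar\kappa)$ surviving in the special fibre gives a nonzero differential-form-valued function on $\mathcal{X}(\mathcal{O}_w)$. Finally, when $X$ is base changed from a finite field, every place is of good reduction, so no places are exceptional.
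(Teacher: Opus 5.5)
\textbf{Verdict: there is a genuine gap in the case $\ell=p$, in both directions.}

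\textbf{The case $\ell\neq p$ is sound.} Your residue/purity argument and your tame-symbol plus Lang--Weil argument are essentially the paper's: Lemma~\ref{g4} for $\ell\neq p$, together with the remark following Proposition~\ref{local ev}. The paper proves the first direction instead via Proposition~\ref{cor: UX0}.

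\textbf{The ``finitely many'' direction for $\ell=p$.} Here you give only expectations, and the route you sketch does not work as stated. The hypotheses $U_X=0$ and $\mathbf{Pic}_{X/k}[p]=0$ concern the generic fibre and need not pass to the special fibres of a smooth model. For example, a smooth family of K3 surfaces with non-supersingular generic fibre can have supersingular fibres at places of good reduction, and there $\mathbf{R}^2\pi_{v*}\mu_p$ cannot be finite over $\mathcal{O}_v$. So you have no basis for claiming that every place of good reduction is irrelevant. The paper obtains this only in special cases: for constant $X$, or under $\mathrm{Br}(X_{\bar k})[p^\infty]=0$ (Proposition~\ref{a4}).

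The missing idea is to control the fppf sheaves $\mathbf{R}^q\pi_*\mu_p$ on a model, rather than refined residues.
\begin{itemize}
\item By Bragg--Olsson's generic representability one shrinks $C$ so that $\mathbf{R}^1\pi_*\mu_p=0$, $\mathbf{R}^2\pi_*\mu_p$ is finite flat, and $\mathbf{R}^3\pi_*\mu_p$ is affine. Lemma~\ref{lem: UX} is what turns $U_X=0$ into the finiteness.
\item An induction on $n$ then makes $\mathbf{R}^2\pi_{v*}\mu_{p^n}$ finite. Hence $\mathrm{H}^0(\mathcal{O}_w,\mathbf{R}^2\pi_{v*}\mu_{p^n})=\mathrm{H}^0(L_w,\mathbf{R}^2\pi_{v*}\mu_{p^n})$ for every finite $L_w/k_v$, separable or not.
\item The Leray and Kummer sequences then give $\mathrm{Br}(X_{L_w})[p^n]=\mathrm{Br}(L_w)[p^n]\oplus\mathrm{Br}(\mathcal{X}_{\mathcal{O}_w})[p^n]$ (Proposition~\ref{cor: UX0}).
\end{itemize}

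\textbf{The ``almost all'' direction for $\ell=p$.} The decisive step is showing that the constructed class has non-constant evaluation on $X(L_w)$. That is exactly the Artin--Schreier--Witt or differential-form computation you leave undone, and for wildly ramified classes it is not routine. Note also that a $\mu_p$-torsor, resp.\ an $\alpha_p$-torsor, must be paired with $\mathrm{H}^1(L,\mathbb{Z}/p)=L/\wp(L)$, resp.\ $L/L^p$, not with $L^\times$.

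The paper avoids this computation using Krishna--Majumder (Proposition~\ref{local ev}). It suffices to produce a class outside $\mathrm{Br}(\mathcal{X})+\mathrm{Br}(k_v)$, which becomes a statement about group schemes.
\begin{itemize}
\item For $\mathbf{Pic}_{X/k}[p]\neq0$ this is Lemma~\ref{g4}. It uses a subgroup $G_1\in\{\mathbb{Z}/p,\mu_p,\alpha_p\}$ of $\mathbf{R}^1\pi_*\mu_p$, density of $\mathrm{H}^1(k,G_1)$ in $\mathrm{H}^1(k_v,G_1)$, and compactness of $\mathrm{H}^1(\mathcal{O}_v,G_{1,v})$.
\item For $U_X\neq0$ this is Lemma~\ref{g3}. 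The $\mathcal{O}_w$-points of the closure of a copy of $\mathbb{G}_a$ in $\mathbf{R}^2\pi_{v*}\mu_p$ land in $c_v\mathcal{O}_w$ for every $L_w$, so any $a\in k$ with $a/c_v\notin\mathcal{O}_v$ works. Bhatt's theorem then kills $d(a)\in\mathrm{H}^2(k,\mathbf{R}^1\pi_*\mu_p)$ after a finite extension, so that $a$ lifts to a Brauer class.
\end{itemize}
So non-constancy comes from $a$ being non-integral relative to the model, not from anything ``surviving in the special fibre''. Valloni's K3-specific argument is not needed.

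Both lemmas need Bragg--Olsson to make $\mathbf{R}^1\pi_{v*}\mu_p$ finite flat and $\mathbf{R}^2\pi_{v*}\mu_p$ affine of finite type over $\mathcal{O}_v$. This is what determines the ``almost all'' for $\ell=p$, which your proposal leaves unspecified.
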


The hypothesis ${\bf Pic}_{X/k}[\ell]=0$ implies that the Picard scheme of $X$
has dimension~$0$. If $\ell=p$, it even implies that ${\bf Pic}_{X/k}$ is \'etale.

 The proof of Theorem \ref{theo: main}\ref{Aii} depends on relatively standard calculations in \'etale cohomology, and one can be reasonably explicit about the set of exceptional places in each case: they are simply the places of bad reduction. In particular, they are independent of $\ell\neq p$, and so we get:

\begin{corn} \label{cor: main}Let $X$ be a smooth, projective, geometrically integral variety over a global function field $k$. If $U_X=0$ and ${\bf Pic}_{X/k}$ is torsion-free, then only finitely many places of $k$ are potentially relevant to the Brauer--Manin obstruction for $X$. Otherwise, almost all places are potentially relevant to the Brauer--Manin obstruction for $X$.
\end{corn}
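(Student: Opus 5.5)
We deduce Corollary \ref{cor: main} formally from Theorem \ref{theo: main}, applied to every prime $\ell$. The key observation is that a place $v$ is potentially relevant to the Brauer--Manin obstruction on $X$ if and only if it is potentially relevant to the $\ell$-primary obstruction for some prime $\ell$.

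We first check this. $\Br(X_L)$ is a torsion group, so any $\mathcal{A}$ with $\mathcal{A}(P)\neq\mathcal{A}(Q)$ decomposes as a finite sum of its $\ell$-primary components $\mathcal{A}_\ell$. Evaluation at a point is a homomorphism, so at least one $\mathcal{A}_\ell$ must also distinguish $P$ and $Q$. The converse is trivial.

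\emph{Finiteness.} Suppose $U_X=0$ and ${\bf Pic}_{X/k}$ is torsion-free, so that ${\bf Pic}_{X/k}[\ell]=0$ for every prime $\ell$. Part \ref{Ai} of Theorem \ref{theo: main} gives a finite set $S_p$ of places outside which no place is potentially relevant to the $p$-primary obstruction. Part \ref{Aii} gives, for each $\ell\neq p$, a finite exceptional set $S_\ell$.

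The union of the $S_\ell$ over all $\ell$ need not be finite in general. This is the main point of the argument, and here we use the remark made before the corollary: in the proof of part \ref{Aii}, the exceptional set can be taken to be the set $S_{\rm bad}$ of places of bad reduction of $X$. This set is finite and independent of $\ell$. So every potentially relevant place lies in the finite set $S_p\cup S_{\rm bad}$.

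Concretely, at a place $v\notin S_{\rm bad}$ we choose a smooth projective model over the local ring. For $\ell\neq p$, the torsion-freeness of the Picard scheme together with smooth and proper base change should show that every $\ell$-primary class on $X_L$, for any finite $L$, has constant evaluation on $X(L_w)$ for $w\mid v$. The heart of this step is that the $\ell$-adic cohomology of the special and generic fibres agree, which is uniform in $\ell$. I expect the only real obstacle to be verifying that the argument behind \ref{Aii} produces an exceptional set that is genuinely uniform in $\ell$. If it does not, we must re-run that argument and check that the only input depending on $v$ is good reduction.

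\emph{Infinitude.} Suppose $U_X\neq0$ or ${\bf Pic}_{X/k}$ has nonzero torsion. In the first case, and in the case ${\bf Pic}_{X/k}[p]\neq0$, part \ref{Ai} says almost all places are potentially relevant to the $p$-primary obstruction. Otherwise ${\bf Pic}_{X/k}[\ell]\neq0$ for some $\ell\neq p$, and part \ref{Aii} gives the same conclusion for the $\ell$-primary obstruction. By the observation above, in either case almost all places are potentially relevant to the full Brauer--Manin obstruction.
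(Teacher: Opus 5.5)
Your proposal is correct and follows the paper's route. Corollary~B is deduced from Theorem~\ref{theo: main} in two steps: split Brauer classes into $\ell$-primary components, then observe that in the proof of part~\ref{Aii} the exceptional places are just those outside a single smooth projective spreading out $\X\to C$, so they do not depend on $\ell\neq p$. The uniformity you flag as the obstacle does hold there: for $\ell\neq p$, smooth proper base change makes every ${\bf R}^q\pi_*\mu_{\ell^n}$ finite \'etale on all of $C$, and ${\bf R}^1\pi_*\mu_\ell=0$ on $C$ because its generic fibre is ${\bf Pic}_{X/k}[\ell]=0$, so Proposition~\ref{cor: UX0} applies at every $v\in|C|$ simultaneously for all $\ell\neq p$.
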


The proof of Theorem \ref{theo: main}\ref{Ai} is more involved, and depends on two key ingredients: the generic representability results of Bragg--Olsson for flat cohomology \cite{BO}, and, for the second statement, recent results of Krishna and Majumder \cite[Theorem 1.1]{KM} on Kato's filtration in equicharacteristic $p$. The dependence on the former in particular makes it more difficult to be explicit about the precise set of places required. Indeed, if $U_X$ and ${\bf Pic}_{X/k}[p]$ both vanish, then the set of places which are not potentially relevant comes from a smooth spreading out $\pi\colon \X\to C$ of $X$, where $C$ is a smooth curve with function field $k$, for which 
the fppf sheaf ${\bf R}^2\pi_*\mu_p$ is representable by a finite flat group scheme and ${\bf R}^3\pi_*\mu_p$ is representable by an affine group scheme. If either $U_X$ or ${\bf Pic}_{X/k}[p]$ is non-zero, then all places of $C$ are potentially relevant provided that ${\bf R}^1\pi_*\mu_p$ is representable by a finite flat group scheme, and ${\bf R}^2\pi_*\mu_p$ by an affine group scheme of finite type. The results of Bragg--Olsson guarantee the existence of such a curve $C$ in each case, but (to the best of our knowledge) give little control over how large we can take $C$ to be.

In some cases, however, $C$ can be made explicit. For example, when $X$ is constant we can take $C$ to be the unique smooth projective curve with function field $k$. This leads to the following: 

\begin{theon}[Corollary \ref{cor: const zero}, Theorem \ref{theo: const UXn0}]\label{theo: main cons}
Let $X$ be a smooth, projective, geometrically integral variety over a finite field $\F$, and let $k/\F$ be a global function field.
\begin{enumerate}[label=\normalfont(\roman*)]
\item If $U_X=0$ and ${\bf Pic}_{X/\F}$ is torsion-free, then $X({\bf A}_k)^\Br=X({\bf A}_k)$.
\item If $U_X\neq 0$, or ${\bf Pic}_{X/\F}[\ell]\neq 0$ for some prime $\ell$,  then every place of $k$ is potentially relevant to the Brauer--Manin obstruction for $X_k$.
\end{enumerate}  
\end{theon}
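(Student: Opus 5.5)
Part (i) will use the constant spreading out. Let $C$ be the smooth projective curve over $\F$ with function field $k$ (enlarging $\F$ inside $k$ if necessary), and let $\X=X\times_\F C\to C$. Every place $v$ gives $\mathcal O_v$-points that extend $X(k_v)$, by properness. For any $\mathcal A\in\Br(X_k)$, I will show that its evaluation $X(k_v)\to\Br(k_v)$ is constant for \emph{every} $v$. Once that holds, pairing with $\mathcal A$ is constant on each local factor. The Brauer--Manin pairing is then constant on $X({\bf A}_k)$, and so every adelic point is orthogonal, since we can subtract the value at a global point.

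There is one gap: $X(k)$ could be empty. In that case I replace "constant" by the statement that $\mathcal A$ is, up to an element of $\Br(k)$, unramified along $\X$ and pulled back from $\Br(X)\oplus \Br(C)$-type classes. More concretely, I use a Künneth-type decomposition:
\[
\Br(X_k)/\Br(k)\hookrightarrow \Br(X_{\bar k})^{\Gal}\oplus \H^1(k,\Pic(X_{\bar k}))\text{-contributions}.
\]
Under the hypotheses, $\Pic_{X_{\bar k}}$ is torsion free and finitely generated, with dimension $0$, étale. For $\ell\neq p$, the purity and Künneth theorems give $\H^2(X_{\bar k}\times C_{\bar k},\mu_{\ell^n})$ as a sum of $\H^2(X)$, $\H^2(C)$ and $\H^1\otimes\H^1$ terms, and the last vanishes because $\H^1(X_{\bar k},\mu_{\ell^n})=\Pic[\ell^n]=0$. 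Every $\ell$-primary class therefore extends over $\X$ modulo $\Br(k)$, and is pulled back from $X$ up to constant classes. An element of $\Br(X)$ evaluated at $\mathcal O_v$-points factors through the residue field, so its values lie in $\Br(\F_v)=0$. I expect the output to be exactly $X({\bf A}_k)^\Br=X({\bf A}_k)$, because the pairing of $\sum$ local values of a constant class vanishes by reciprocity.

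For $p$-primary classes I will repeat this argument using flat (fppf) cohomology of $\mu_{p^n}$. Here $U_X=0$ together with ${\bf Pic}[p]=0$ should give the Künneth statement for the constant family. The point is that $\H^1_{\rm fl}(X_{\bar k},\mu_p)=0$ and $\H^2(X_{\bar k},\mu_{p^n})$ is finite étale, so ${\bf R}^2\pi_*\mu_{p^n}$ is constant finite and its classes descend. The main obstacle is precisely this step: showing that a $p$-primary class in $\Br(X_k)$ is unramified over $\X$ modulo $\Br(k)$. Swan conductors could a priori be nonzero. My first attempt will be to use the constant family to show that the refined Swan conductor lands in $\Omega^1$-valued groups coming from $\H^0(X,\Omega^1)$, $\H^1(X,\O)$. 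These are killed via the crystalline slope criterion (Lemma \ref{lemma1}), since $U_X=0$ forces the relevant Frobenius-stable parts to vanish.

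For part (ii), suppose first that ${\bf Pic}[\ell]\neq0$. After a finite extension $L$, take a $\mu_\ell$-torsor or an $\ell$-torsion line bundle class $\chi\in\H^1(X_L,\mu_\ell)$. At any place $w$, pair it with a character $t$ via the cup product $\chi\cup\pi^*(\text{a cyclic algebra in }k_w)$. Better still, take $\mathcal A=\chi\cup f$ with $f\in L^*$ a uniformiser at $w$, which is possible after a further extension by weak approximation. Evaluation then varies because the $\F_w$-points of the smooth reduction realise different values of $\chi$ over a finite extension of the residue field. When $U_X\neq0$, I will use the nonzero $\G_a$-part of $\Br(X_{\bar k})$. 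After an inseparable extension I get classes of the form $\alpha+ t\beta$ with $t\in L$ having a pole at $w$, and by Krishna--Majumder these have nontrivial Swan conductor at $w$, which forces non-constant evaluation as in Valloni's argument. This works at every place because the family is constant with good reduction everywhere.
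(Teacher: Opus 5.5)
Your overall architecture (constant model with good reduction everywhere; local constancy plus reciprocity for (i); non-integral classes plus Krishna--Majumder for (ii)) matches the paper. However, the decisive $p$-primary step of (i) is left open, and the refined Swan conductor plan you propose for it does not work. Lemma \ref{lemma1} only relates $\dim U_X$ to $\H^2(X,\O_X)$ and slopes, whereas a refined Swan conductor of a Brauer class takes values in (twists of) $\Omega^2\oplus\Omega^1$ of the special fibre, and $U_X=0$ does not force $\H^0(X,\Omega^2_X)=0$ (ordinary K3 surfaces). The missing observation is that facts you already list finish the job with no ramification theory. For $\X=X\times_\F\O_v$ the fppf sheaves ${\bf R}^q\pi_{v*}\mu_{p^n}$ are base changed from $\F$. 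Moreover ${\bf R}^1\pi_*\mu_{p^n}={\bf Pic}_{X/\F}[p^n]=0$, and $G={\bf R}^2\pi_*\mu_{p^n}$ is a finite group scheme by Lemma \ref{lem: UX}. It is finite flat, not \'etale in general as you claim (for an ordinary K3 surface it contains $\widehat\Br[p]\cong\mu_p$), but only finiteness matters. The Leray sequences give $\H^2(\X,\mu_{p^n})\cong G(\O_v)$ and $\widetilde\H^2(X_{k_v},\mu_{p^n})\cong G(k_v)$, which coincide by the valuative criterion. So \eqref{11} yields $\Br(X_{k_v})[p^n]=\Br(\X)[p^n]+\Br(k_v)[p^n]$, hence constant evaluation on $X(L_w)$ for every finite $L_w/k_v$; this is Proposition \ref{cor: UX0}. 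The case $X(k)=\emptyset$ also needs no K\"unneth decomposition, and you do not justify the passage from geometric cohomology to $\Br(X_k)=\Br(\X)+\Br(k)$ even for $\ell\neq p$. Instead, take a $0$-cycle $\sum_i n_iQ_i$ of degree $1$ on $X$ (Lang--Weil) and set $k_i=k(Q_i)$. Constancy over $k_{i,w}$ gives $\inv_w\mathcal{A}(Q_i)=[k_{i,w}:k_v]\,\inv_v\mathcal{A}(P_v)$, and summing with reciprocity on each $k_i$ finishes.

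In (ii) your cup-product construction is fine for $\ell\neq p$ (it is an explicit form of Lemma \ref{g4}), but not for $\ell=p$. There, a Brauer class of order $p$ comes from the pairing $\Z/p\otimes\mu_p\to\mu_p$, not from two $\mu_p$-classes. If ${\bf Pic}_{X/\F}[p]$ is local-local, then $X_{\bar\F}$ has no non-trivial $\mu_p$-torsors (as ${\bf Pic}[p](\bar\F)=0$). It also has no non-trivial \'etale $\Z/p$-torsors, since these correspond to homomorphisms $\mu_p\to{\bf Pic}_{X/\F}$. So there is no $\chi$ at all. When ${\bf Pic}[p]\supseteq\Z/p$, the resulting $\mu_p$-torsor is invisible on $\F_w$-points since $\H^1(\F_w,\mu_p)=0$, so Lang--Weil cannot produce the variation. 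The classes that work come from the Leray term $\H^1(L,{\bf R}^1\pi_*\mu_p)\subset\widetilde\H^2(X_L,\mu_p)$, e.g.\ $\H^1(L,\alpha_p)=L/L^p$. These are shown non-integral at $w$ by the density/compactness argument of Lemma \ref{g4}, after which Proposition \ref{local ev} applies. For $U_X\neq0$ your outline has the right shape, but two steps are missing. First, the obstruction $d(x)\in\H^2(L,{\bf R}^1\pi_*\mu_p)$ in \eqref{eqn exact} to lifting a section $x$ of $\G_a\subset{\bf R}^2\pi_*\mu_p$ to $\widetilde\H^2(X_L,\mu_p)$ must be killed; the paper does this after a finite extension using Bhatt's theorem. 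Second, you must show the lifted class is not in $\Br(\X_{\O_w})+\Br(L_w)$. This follows from the Leray sequence over $\O_w$ and \eqref{11} once $x$ has a pole at $w$, which is easy here since the copy of $\G_a$ is defined over $\F$. Only then does Proposition \ref{local ev} convert this into non-constant evaluation; Valloni's argument covers only supersingular K3 surfaces.
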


Another case when $C$ can be made explicit is when the geometric Brauer group
of $X$ is $p$-torsion free, that is, $\Br(X_{\bar k})[p^\infty]=0$. Then, as in the $\ell$-adic case, we can
take $C$ to consist of the places of good reduction, see Proposition \ref{a4}. 
We give examples of non-isotrivial K3 surfaces with 
$\Br(X_{\bar k})[p^\infty]=0$ (Proposition \ref{a5}), so this case is not subsumed by Theorem \ref{theo: main cons}.

One may enquire about the field extensions needed to make places relevant to the Brauer--Manin
obstruction. We give examples of supersingular K3 surfaces over global function fields of characteristic $p>2$
such that $\Br(X)/\Br_0(X)$ is finite, see Proposition \ref{5.3}, answering a question raised by the authors in their previous paper \cite{LS24}. In this case
the Brauer--Manin set $X({\bf A}_k)^\Br$ is open in $X({\bf A}_k)$, in particular, 
only finitely many primes of $k$ are relevant to the Brauer--Manin
obstruction for $X$. Thus in general one cannot drop `potential' from
`potentially relevant' in the case $U_X\neq 0$ of Corollary~B.
This also shows that in general a field extension in \cite[Theorem 1.3]{V} is necessary.
In our example,
the finiteness of $\Br(X)/\Br_0(X)$ is due to the fact that the connected component of
the group $k$-scheme ${\bf R}^2\pi_*\mu_p$, where $\pi\colon X\to \Spec\, k$ is the structure morphism,
is a {\em non-split} form of $\G_{a,k}$. Indeed, when $k$ is a global function field of
characteristic $p>2$, the non-split forms of $\G_{a,k}$ have
only finitely many $k$-points, in contrast to $\G_{a,k}$. We use
work of Bragg and Lieblich \cite{BL18}, which also shows that for any supersingular K3 surface
of Artin invariant $\si$
the connected component of ${\bf R}^2\pi_*\mu_p$ is split by the $(\si-1)$-fold Frobenius twist
(Proposition \ref{prop: BL}, see also
Corollary \ref{cococo} for a similar statement for abelian surfaces).

Here is a brief outline of this paper.
In Section \ref{1} we put together known results from the literature to give a formula
for the dimension of $U_X$ in terms of the slopes of 
the crystalline cohomology groups of $X_{\bar k}$.
In Section \ref{2} we combine the representability results of Bragg--Olsson from \cite{BO} with the properties of the evaluation filtration on the local Brauer group established by Krishna and Majumder in \cite{KM} to prove our main result, Theorem \ref{theo: main}.
In Section \ref{2.5} we consider the case of constant varieties, and prove Theorem \ref{theo: main cons}, and in Section \ref{t-free} we discuss varieties with $p$-torsion free geometric Brauer group.
Finally, Section \ref{5} is devoted to the study of ${\bf R}^2\pi_*\mu_p$
for K3 and abelian surfaces, and the example mentioned in the previous paragraph.

\section{Brauer groups in positive characteristic} \label{1}

Let $X$ be a smooth, proper, geometrically integral variety over a field $k$ of characteristic $p>0$,
with algebraic closure $\bar k$. By a theorem of Grothendieck, 
$\Br(X_{\bar k})=\H^2_\et(X_{\bar k},\G_m)$ is a torsion group \cite[Theorem 3.5.5]{CTS21}.
For primes $\ell\neq p$, the structure of 
$\Br(X_{\bar k})[\ell^\infty]$ was determined also by Grothendieck, 
see \cite[Proposition 5.2.9]{CTS21}.
We now discuss the structure of $\Br(X_{\bar k})[p^\infty]$.

The dimension of the unipotent Brauer group $U_X$, as well as the corank of the divisible subgroup of $\Br(X_{\bar k})[p^\infty]$,
can be calculated in terms of the crystalline cohomology groups of $X_{\bar k}$ as follows.
Let $W=W(\bar k)$ be the Witt ring of $\bar k$, with field of fractions $K$.
Write
$$\rho=\rk\, \NS(X_{\bar k}) \quad\text{and}\quad b_2=\dim_K\big( \H^2_\cris(X_{\bar k}/W)\otimes K\big),$$
for the Picard rank and the second Betti number, respectively. Write
$$h=\dim_K\big (\H^2(X_{\bar k},W\O_{X_{\bar k}})\otimes K\big).$$
For any $\lambda \in \Q$, we let $\big(\H^i_\cris(X_{\bar k}/W)\otimes K\big)_{[\lambda]}$ denote the slope $\lambda$ summand of $\H^i_\cris(X_{\bar k}/W)\otimes K$, and for an interval $I\subset \R$, we let  $\big(\H^i_\cris(X_{\bar k}/W)\otimes K\big)_{I}$ denote the maximal summand of $\H^i_\cris(X_{\bar k}/W)\otimes K$ with all slopes in $I$. 
Since the slope spectral sequence degenerates modulo torsion on the first page,
there is, for $i\geq 0$, an isomorphism of isocrystals
$$\H^2(X_{\bar k},W\O_{X_{\bar k}})\otimes K\cong \big(\H^i_\cris(X_{\bar k}/W)\otimes K\big)_{[0,1)},$$
see \cite[II, (3.5.3)]{Ill}. 
We thus have
$$h=\dim_K \big(\H^2_\cris(X_{\bar k}/W)\otimes K\big)_{[0,1)}.$$
If $X$ is projective, we have moreover 
$$\dim_K \big(\H^2_\cris(X_{\bar k}/W)\otimes K\big)_{[0,1)} =\dim_K \big(\H^2_\cris(X_{\bar k}/W)\otimes K\big)_{(1,2]},$$ see \cite[p.~630]{Ill}. 

\ble \label{a2}
Let $X$ be a smooth, projective, geometrically integral variety over a field $k$ of characteristic $p$.
The divisible subgroup of $\Br(X_{\bar k})[p^\infty]$ is isomorphic to $(\Q_p/\Z_p)^{b_2-2h-\rho}$.
\ele
\begin{proof} 
This follows from \cite[II, Proposition 5.12]{Ill}.
\end{proof}

\ble \label{a3}
Let $X$ be a smooth, projective, geometrically integral variety over a field $k$ of characteristic $p$
such that $U_{X}=0$. 
Let $\bar k\subset K$ be an extension of algebraically closed fields. Then the natural map
$\Br(X_{\bar k})\to \Br(X_K)$ is an isomorphism.
\ele
\begin{proof} This map is injective \cite[Proposition 5.2.2]{CTS21}, and induces an isomorphism of
the prime-to-$p$ torsion subgroups \cite[Proposition 5.2.3]{CTS21}. 

The invariants $b_2,h$ and $\rho$ do not change in passing from $\bar k$ to $K$.
Thus Lemma \ref{a2} shows that the restriction of 
${\Br}(X_{\bar k})[p^\infty]\to \Br (X_K)[p^\infty]$
to the divisible subgroups is an injective map of the form
\[ (\Q_p/\Z_p)^{b_2-2h-\rho}\to (\Q_p/\Z_p)^{b_2-2h-\rho},  \]
which is therefore an isomorphism.

Since $U_{X}=0$, the quotient of $\Br(X_{\bar k})[p^\infty]$ by its divisible subgroup is the group of 
$\bar k$-points of the finite \'etale group $D_{X}$.
The pullback map induces an isomorphism $D_{X}(\bar k)\to D_{X}(K)$, and the lemma follows. 
\end{proof}

\medskip

Next, let us define
$$h^{0,i}=\dim_k\, \H^i(X,\O_X), \quad
m^{0,i}=\sum_{\lambda\in[0,1)}(1-\lambda) \dim_K (\H^2(X_{\bar k},W\O_{X_{\bar k}})\otimes K)_{[\lambda]}.$$
The numbers $m^{0,i}$ were introduced by Ekedahl and Crew. By the above, 
the number $m^{0,i}$ only depends on the slopes of $\H^i_\cris(X_{\bar k}/W)$.

\ble
Let $X$ be a smooth, projective, geometrically integral variety over a field $k$ of characteristic $p$.
Then the Picard scheme ${\bf Pic}_{X/k}$ is smooth if and only if $h^{0,1}-m^{0,1}=0$.
\ele
\begin{proof} It is well known (see, e.g., \cite[Theorem 5.1.1]{CTS21})
that in this situation the group scheme ${\bf Pic}_{X/k}$ 
exists and represents the relative Picard functor;
the connected component ${\bf Pic}^\circ_{X/k}$ is projective,
with tangent space at the origin $\H^1(X,\O_X)$. To check smoothness, we may base change to $\bar k$, and thus assume that $k$ is algebraically closed. 

In this case, $A:={\bf Pic}^\circ_{X/k, {\rm red}}$ is an abelian variety over $k$, 
and so ${\bf Pic}^\circ_{X/k}$ is smooth if and only if $h^{0,1}=\dim\, A$.
Let $A^\vee$ be the dual abelian variety of $A$. Thus $A^\vee$ is the Albanese variety of $X$, so 
$\H^1_\cris(X_{\bar k}/W)\cong \H^1_\cris(A^\vee_{\bar k}/W)$ is isomorphic to
the contravariant Dieudonn\'e module of $A^\vee$, which is the same as the
covariant Dieudonn\'e module of $A$. The dimension of the $p$-divisible group of $A$
is calculated in terms of slopes
of the covariant Dieudonn\'e module of $A$ by the same formula that defines $m^{0,1}$,
see \cite[Proof of Corollary III.3.4]{AM} or \cite[Lemma 3.6]{GSY}.
\end{proof}

\ble \label{lemma1}
Let $X$ be a smooth, projective, geometrically integral variety over a field $k$ of characteristic $p$,
such that ${\bf Pic}_{X/k}$ is smooth. Then the following properties hold:
\begin{enumerate}[nosep, label=\normalfont(\roman*)]
\item the formal Brauer group $\widehat\Br(X)$ is representable by a formal group scheme;
\item if $\H^3_\cris(X/W)$ is torsion-free, then $\widehat\Br(X)$ is smooth;
\item if $\widehat\Br(X)$ is smooth, then $\dim(U_X)=h^{0,2}-m^{0,2}$.
\end{enumerate}

\ele
\begin{proof}
\begin{enumerate}[nosep, label=(\roman*)]
\item This is a result of Artin and Mazur \cite[Corollary II.4.1]{AM}, see 
also \cite[v1, Proposition 10.11]{BO}.
\item Smoothness is fpqc local on the base, so we can check the smoothness of $\widehat\Br(X)$ 
over $\bar k$, where the result follows from \cite[Theorem 1.2]{G}.
\item By definition, this is calculated after base changing to $\bar k$, where the result follows from 
\cite[Proposition 3.7]{GSY}. \qedhere
\end{enumerate}
\end{proof}

Now, let $\pi\colon X\to \Spec\,k$ be the structure morphism. Results of Bragg--Olsson show that if $X$ is projective, then each fppf sheaf ${\bf R}^q\pi_{*}\mu_{p^n}$ is representable by an affine group scheme of finite type over $k$, see \cite[Corollary 1.6]{BO}. We then have the following relationship with the unipotent Brauer group $U_X$:

\ble \label{lem: UX} 
Let $\pi\colon X\to \Spec\,k$ be a smooth, projective, geometrically integral variety over a field $k$ of characteristic $p$.
Then the following are equivalent:
\begin{enumerate}[nosep, label=\normalfont(\roman*)]
\item \label{UXi} $U_X= 0$;
\item \label{UXii} for all $n\geq 1$, ${\bf R}^2\pi_{*}\mu_{p^n}$ is a finite (flat) group scheme over $k$;
\item \label{UXiii} for some $n\geq 1$, ${\bf R}^2\pi_{*}\mu_{p^n}$ is a finite (flat) group scheme over $k$.
\end{enumerate}
\ele
\begin{proof} We may base change to $\bar k$, and thus assume that $k$ is algebraically closed. In this case ${\bf R}^2\pi_{*}\mu_{p^n}$ is isomorphic to a product of a unipotent group 
scheme ${\bf U}_n$ and a group scheme of multiplicative type $H$, by \cite[Theorem 16.13]{Mil17}. 
Since ${\bf R}^2\pi_{*}\mu_{p^n}$ is $p^n$-torsion, it cannot have $\G_m$ as a direct factor, hence $H$ is a finite group scheme of multiplicative type, annihilated by $p^n$.
Thus $H(k)$ is trivial, so we have 
\[ \H^2(X,\mu_{p^n})= \H^0(k,{\bf R}^2\pi_*\mu_{p^n}) = {\bf U}_n(k).  \]
If $U_X=0$, we see that $G_X=D_X$ is a finite \'etale group scheme, thus
\[ \Br(X)[p^\infty] \cong (\Q_p/\Z_p)^{b_2-2h-\rho} \oplus D_X(k)   \]
and so $\Br(X)[p^n]$ is finite. We then deduce that $\H^2(X,\mu_{p^n})= {\bf U}_n(k)$ is finite from the Kummer exact sequence. Since $k$ is algebraically closed, we conclude that
${\bf U}_n$ is a finite unipotent group scheme. Hence \ref{UXi} implies \ref{UXii}.

It is clear that \ref{UXii} implies \ref{UXiii}. Finally, if $\dim\, {\bf R}^2\pi_{*}\mu_{p^n}=0$ for some $n\geq 1$, then $\H^0({\bar k},{\bf R}^2\pi_*\mu_{p^n})$ is finite, which forces $\Br(X_{\bar k})[p^n]$ to be finite, and hence $U_X=0$. Thus \ref{UXiii} implies \ref{UXi}.
\end{proof}

\section{Main results} \label{2}

Let $k$ be a global function field of characteristic $p>0$, and ${\bf A}_k$ its ad\`ele ring. Let 
$\pi\colon X\to \Spec\,k$ be a smooth, projective, geometrically integral variety over $k$, which we assume to have a rational point. We fix a place $v$ of $k$, and let $k_v$ denote the completion of $k$ at $v$, $\O_v$ its ring of integers, and $\F_v$ its residue field. We denote the base change of $X$ to $k_v$ by $X_{k_v}$. 
Since $X(k)\neq\emptyset$, the pullback maps $\Br(k)\to \Br(X)$ and $\Br(k_v)\to \Br(X_{k_v})$ 
are split injective.

We assume that $X$ has good reduction at $v$, in the sense that there exists a smooth and proper morphism of schemes 
$\pi_v \colon \mathcal{X} \to \Spec\,\O_v$ whose generic fibre is $X_{k_v}$. For each prime number $\ell$ (including $\ell=p$), and any $n\geq 1$, we consider the sheaves ${\bf R}^q\pi_{v*}\mu_{\ell^n}$ on the (large) fppf site of $\O_v$ (and similarly for ${\bf R}^q\pi_{*}\mu_{\ell^n}$). If $\ell\neq p$, then standard theorems in \'etale cohomology imply that these are representable by finite \'etale group schemes over $\O_v$. If $\ell=p$, then the results of Bragg--Olsson \cite{BO} imply that each ${\bf R}^q\pi_{*}\mu_{p^n}$ is representable by an affine group scheme of finite type over $k$.
If $q=1$, then the structure theory of the Picard scheme shows that ${\bf R}^1\pi_{*}\mu_{p^n}={\bf Pic}_{X/k}[p^n]$ is a finite flat group scheme over $k$. We do not know if the ${\bf R}^q\pi_{v*}\mu_{p^n}$ are necessarily representable by group schemes over $\O_v$ in general.

Write $\widetilde\H^2(X,\mu_{\ell^n})$ for the cokernel of the natural map
$\H^2(k,\mu_{\ell^n})\to \H^2(X,\mu_{\ell^n})$, and use a similar notation for $X_{k_v}$.
Since $X(k)\neq \emptyset$, the Kummer exact sequences for $
\X$, $X_{k_v}$, and $X$ give rise to the following
commutative diagram with exact rows
\begin{equation} \label{11}
\begin{split}
\xymatrix@C=22pt{
0  \ar[r] & \Pic(\X)/{\ell^n} \ar[r] \ar[d]^{\cong} &  \H^2(\X,\mu_{\ell^n}) 
\ar[r]\ar@{^{(}->}[d] & \Br(\X)[\ell^n] \ar[r]\ar@{^{(}->}[d] & 0 \\
 0\ar[r] & \Pic(X_{k_v})/\ell^n \ar[r] & \widetilde\H^2(X_{k_v},\mu_{\ell^n}) \ar[r] & 
\frac{\Br(X_{k_v})[\ell^n]}{\Br({k_v})[\ell^n]}\ar[r] & 0\\ 
0\ar[r] & \Pic(X)/\ell^n \ar[r]\ar[u] & \widetilde\H^2(X,\mu_{\ell^n}) \ar[r]^{\ \si}\ar[u] & 
\frac{\Br(X)[\ell^n]}{\Br(k)[\ell^n]}\ar[r]\ar[u] & 0  
  } 
\end{split}
\end{equation}
Here we used that $\Br(\O_v)=0$ and $\Pic(\O_v)=0$, showing that $\H^2(\O_v,\mu_{\ell^n})=0$. Since $X(k)\neq \emptyset$, the Leray spectral sequence
$$\H^p(k,{\bf R}^q\pi_*\mu_{\ell^n})\Rightarrow\H^{p+q}(X,\mu_{\ell^n})$$gives rise to an 
exact sequence
\begin{equation} \label{eqn exact} 0 \to \H^1(k,{\bf R}^1\pi_*\mu_{\ell^n}) \to \widetilde{\H}^2(X,\mu_{\ell^n}) \to \H^0(k,{\bf R}^2\pi_*\mu_{\ell^n}) \overset{d}{\to} \H^2(k,{\bf R}^1\pi_*\mu_{\ell^n}).
\end{equation}
%, \\
%0 &\to \H^1(k_v,{\bf R}^1\pi_*\mu_{\ell^n}) \to \widetilde{\H}^2(X_{k_v},\mu_{\ell^n}) \to \H^0(k_v,{\bf R}^2\pi_*\mu_{\ell^n}) \overset{d}{\to} \H^2(k_v,{\bf R}^1\pi_*\mu_{\ell^n}), \nonumber \\
% 0 &\to \H^1(\O_v,{\bf R}^1\pi_{v*}\mu_{\ell^n}) \to \H^2(\X,\mu_{\ell^n}) \to \H^0(\O_v,{\bf R}^2\pi_{v*}\mu_{\ell^n})\overset{d}{\to} \H^2(\O_v,{\bf R}^1\pi_{v*}\mu_{\ell^n}), \nonumber 
%\end{align}
%If $\ell\neq p$, then $\H^2(\O_v,{\bf R}^1\pi_{v*}\mu_{\ell^n})=0$ by \cite[III, Lemma 1.1, Remark 1.2]{Mil86}.
There are similar exact sequences for $X_{k_v}$ and $\X$.

\bpr \label{cor: UX0} 
Assume that ${\bf R}^1\pi_{v*}\mu_{\ell}=0$, that ${\bf R}^2\pi_{v*}\mu_\ell$ is a finite flat group scheme over $\O_v$, and that ${\bf R}^3\pi_{v*}\mu_\ell$ is an affine group scheme over $\O_v$. Then for any finite extension  $L_w/k_v$, with ring of integers $\O_w$,  
and any $n\geq 1$, the pullback maps induce an isomorphism
\begin{equation} \label{pullback iso} \Br({L_w})[\ell^n] \oplus \Br(\X_{\O_w})[\ell^n]\isomto \Br(X_{L_w})[\ell^n].\end{equation}
In particular, for any such $L_w$, and any $\mathcal{A} \in \Br(X_{L_w})[\ell^\infty]$, the evaluation map
\[ {\rm ev}_{\mathcal{A}} \colon X(L_w)\to \Q_\ell/\Z_\ell \]
is constant.
\epr

\begin{proof}We first show by induction that, for all $n\geq 1$, ${\bf R}^1\pi_{v*}\mu_{\ell^n}=0$ and ${\bf R}^2\pi_{v*}\mu_{\ell^n}$ is a finite (but not necessarily flat) group scheme over $\O_v$. The first is clear, and for the second, we have an exact sequence
\[ 0 \to {\bf R}^2\pi_{v*}\mu_{\ell}\to {\bf R}^2\pi_{v*}\mu_{\ell^{n+1}}\to {\bf R}^2\pi_{v*}\mu_{\ell^n}\to {\bf R}^3\pi_{v*}\mu_{\ell}. \]
Now, ${\bf R}^3\pi_{v*}\mu_{\ell}$ is an affine group scheme over $\O_v$, whilst 
${\bf R}^2\pi_{v*}\mu_{\ell^n}$ is a finite group scheme over $\O_v$ by the induction hypothesis.
The kernel ${\bf K}_n$ of the map ${\bf R}^2\pi_{v*}\mu_{\ell^n}\to {\bf R}^3\pi_{v*}\mu_{\ell}$
is a closed subgroup of ${\bf R}^2\pi_{v*}\mu_{\ell^n}$, and hence a finite group scheme over $\O_v$. The exact sequence
\[ 0\to {\bf R}^2\pi_{v*}\mu_{\ell}\to {\bf R}^2\pi_{v*}\mu_{\ell^{n+1}}\to{\bf K}_n \to 0. \]
shows that ${\bf R}^2\pi_{v*}\mu_{\ell^{n+1}}$ is a ${\bf R}^2\pi_{v*}\mu_{\ell}$-torsor over ${\bf K}_n$. It is therefore representable by a scheme which is finite and flat over $\mathbf{K}_n$, and hence finite over $\O_v$.

Since ${\bf R}^1\pi_{v*}\mu_{\ell^n}=0$, the analogues of \eqref{eqn exact} 
for $\X_{\O_w}$ and $X_{L_w}$ give isomorphisms
\[ \H^2(\X_{\O_w},\mu_{\ell^n})\cong\H^0(\O_w,{\bf R}^2\pi_{v*}\mu_{\ell^{n}}),\quad\widetilde{\H}^2(X_{L_w},\mu_{\ell^n})\cong\H^0(L_w,{\bf R}^2\pi_{v*}\mu_{\ell^{n}}).\]
The group scheme ${\bf R}^2\pi_{v*}\mu_{\ell^{n}}$ is finite, so the natural map
\[ \H^0(\O_w,{\bf R}^2\pi_{v*}\mu_{\ell^n})\to \H^0(L_w,{\bf R}^2\pi_{v*}\mu_{\ell^n})\]
is bijective. The isomorphism \eqref{pullback iso} now follows from \eqref{11}.
Using that $\Br(\O_w)=0$, we deduce the claimed constancy of the evaluation maps.
\end{proof}

To obtain results in the opposite direction, we use the following proposition.
An analogous statement in the mixed characteristic case is \cite[Theorem A (1), (2)]{BN}.

\bpr[Krishna--Majumder] \label{local ev}
Let $\pi_v\colon \X\to\Spec(\O_v)$ be a smooth and proper morphism of schemes
with geometrically integral generic fibre $X_{k_v}:=\X_{k_v}$.
Let ${\rm Ev}_{-1}\Br(X_{k_v})$ denote the set of $\mathcal{A}\in\Br(X_{k_v})$ such that,
for any finite field extension $L_w/k_v$,
the evaluation map $\ev_{\mathcal{A}}\colon X(L_w)\to\Br(L_w)$ is constant.
Then ${\rm Ev}_{-1}\Br(X_{k_v})$ is the subgroup of $\Br(X_{k_v})$
generated by the images of $\Br(\X)$ and $\Br(k_v)$.
\epr
\begin{proof} 
The subgroup ${\rm Ev}_{-1}$ is part of an increasing filtration ${\rm Ev}_m\Br(X_{k_v})$ on
$\Br(X_{k_v})$, defined for $m\geq -2$, see \cite[(1.2)]{KM}. Since $\X$ is smooth and proper, it follows from \cite[Theorem 9.3]{KM} that this definition is equivalent to the Bright--Newton evaluation filtration, and hence to the definition of ${\rm Ev}_{-1}$ given above.

Since $\X_{\F_v}$ is geometrically integral, $\H^1(\F_v,\Q/\Z)$ injects into 
$\H^1(\X_{\F_v},\Q/\Z)$, and it thus follows from \cite[Theorem 1.1]{KM} that there is a residue map
$\partial \colon  {\rm Ev}_{-1}\Br(X_{k_v}) \to \H^1(\F_v,\Q/\Z)$
such that
$$\mathcal{A}\in {\rm Ev}_{-2}\Br(X_{k_v})\iff\partial(\mathcal{A})=0\iff \mathcal{A} \in 
\Br(\X)\subset\Br(X_{k_v}).$$
Since ${\Br}(k_v)\subset {\rm Ev}_{-1}\Br(X_{k_v})$ is isomorphic to $\H^1(\F_v,\Q/\Z)$ via $\partial$, the proposition follows.
\end{proof}

\brem The results of \cite{KM} are essential for the case $\ell=p$, but for the elements
of order not divisible by $p$ we can give a short direct proof. Indeed, letting $\ell\neq p$, it is clear that any class in $\Br(\X)[\ell^\infty]+\Br(k_v)[\ell^\infty]$ has constant evaluation maps after any extension. For the converse, we consider the residue map
\[ {\rm res} \colon {\rm Br}(X_{k_v})[\ell^\infty]\to \H^1(\X_{\F_v},\Q_\ell/\Z_\ell) \]
as in \cite[\S3.7]{CTS21}. The subgroup of ${\rm Br}(X_{k_v})[\ell^\infty]$ generated by $\Br(\X)[\ell^\infty]$ and $\Br(k_v)[\ell^\infty]$ is then precisely ${\rm res}^{-1}\big(\H^1(\F_v,\Q_\ell/\Z_\ell)\big)$. 

Suppose that $\mathcal{A}\in \Br(X_{k_v})[\ell^\infty]$ but $\res(\mathcal{A})\notin\H^1(\F_v,\Q_\ell/\Z_\ell)$, 
equivalently, the image of $\res(\mathcal{A})$
in $\H^1(\X_{\ov\F_v},\Q_\ell/\Z_\ell)$ has order $\ell^n$ for some positive integer $n$.
Replacing $k_v$ by a finite unramified extension we arrange that 
$\res(\mathcal{A})$ is an element of order $\ell^n$ in $\H^1(\X_{\F_v},\Z/\ell^n)\subset
\H^1(\X_{\F_v},\Q_\ell/\Z_\ell)$.
Let $f\colon Y\to \X_{\F_v}$ be a torsor with structure group $\Z/\ell^n$
whose class in $\H^1(\X_{\F_v},\Z/\ell^n)$ is $\res(\mathcal{A})$. 
By construction, $Y$ is geometrically integral. 
If $x\in \X(\F_v)$ is such that $f^{-1}(x)$ is a trivial torsor for $\Z/\ell^n$, then we have
$|f^{-1}(x)(\F_v)|=\ell^n$, otherwise $|f^{-1}(x)(\F_v)|=0$.
Thus replacing $k_v$ by a finite unramified extension,
and applying the Lang--Weil estimates, we ensure that there are points $P,Q\in \X(\F_v)$
such that $f^{-1}(P)$ is a trivial torsor whereas $f^{-1}(Q)$ is a non-trivial torsor for $\Z/\ell^n$.
Now Hensel's lemma produces $k_v$-points $\tilde P, \tilde Q\in X(k_v)$ such that 
$\mathcal{A}(\tilde P)\neq\mathcal{A}(\tilde Q)$.
\erem 

For a prime number $\ell$ we consider the natural map 
$$\eta_v \colon \H^1(k,{\bf R}^1\pi_*\mu_{\ell})\to\H^1(k_v,{\bf R}^1\pi_*\mu_\ell)/{\rm im}(\H^1(\O_v,{\bf R}^1\pi_{v*}\mu_{\ell})).$$

\ble \label{g4} Assume that ${\bf R}^1\pi_{v*}\mu_{\ell}$ is a finite flat group scheme over $\O_v$ of positive rank. Then, up to replacing $k$ by a finite extension, there exists $a\in \H^1(k,{\bf R}^1\pi_*\mu_{\ell})$ such that $\eta_v(a)\neq 0$. If $\ell\neq p$, then this extension may be taken to be separable.
\ele

\begin{proof} Throughout the proof we use the injectivity of the natural map 
$$\H^1(\O_v,G)\to \H^1(k_v,G),$$ 
where $G$ is a finite flat group scheme over $\O_v$ \cite[III, Lemma 1.1, Remark 1.2]{Mil86}.

Let $\ell\neq p$. After taking a finite separable extension of $k$ we may assume that $\mu_\ell\cong \Z/\ell$, and that ${\bf R}^1\pi_{*}\mu_{\ell}$ is a trivial Galois module, thus containing $\mu_\ell$ as a direct factor. The pullback functor from finite \'etale group schemes over $\O_v$ to those over $k_v$ is fully faithful, so this direct factor extends to a direct factor $\mu_\ell$ of ${\bf R}^1\pi_{v*}\mu_{\ell}$. It remains to note that the natural map
$k^\times/(k^\times)^\ell \to k_v^\times/\O_v^\times (k_v^\times)^\ell$
is non-zero.

Now let $\ell=p$. Over $\bar{k}$, any $p$-torsion finite flat group scheme decomposes into a direct product of \'etale, multiplicative, and local-local group schemes. Thus, after passing to a finite field extension of $k$, we can assume that ${\bf R}^1\pi_{*}\mu_{p}$ contains a direct factor $G_0$ which is either isomorphic to $\Z/p$ or $\mu_p$, or is connected and contains $\alpha_p$ as a closed subgroup. Let $G_{0,v}$ denote the closure of $G_{0,k_v}$ inside ${\bf R}^1\pi_{v*}\mu_{\ell}$; this is again a finite flat group scheme over $\O_v$. If $G_0$ is either $\Z/p$ or $\mu_p$, we set $G_1=G_0$ and $G_{1,v}=G_{0,v}$. If $G_0$ is connected and contains a copy of $\alpha_p$, we let $G_1$ denote this copy of $\alpha_p$ and $G_{1,v}$ the closure of $G_{1,k_v}$ inside $G_{0,v}$. In particular, the maps 
$$\H^1(k_v,G_1) \to  \H^1(k_v,{\bf R}^1\pi_{*}\mu_{p}),\quad\quad\H^1(k,G_1)\to  
\H^1(k,{\bf R}^1\pi_{*}\mu_{p})$$ 
are injective. This is immediate when $G_1$ is $\Z/p$ or $\mu_p$, because then $G_1$
is a direct factor; in the local-local case, we use $\H^0(k,G_0/\alpha_p)=0$.
Thus we have a commutative diagram with exact rows
\begin{equation} \label{G_1}
\begin{split}
\xymatrix@C=22pt{
 0 \ar[r] & \H^1(\O_v,G_{1,v}) \ar[r] \ar@{^{(}->}[d] &  \H^1(\O_v,{\bf R}^1\pi_{v*}\mu_{\ell})  
\ar[r]\ar@{^{(}->}[d] & \H^1(\O_v,{\bf R}^1\pi_{v*}\mu_{\ell}/G_{1,v}) \ar@{^{(}->}[d] \\
 0 \ar[r] & \H^1(k_v,G_1) \ar[r] &  \H^1(k_v,{\bf R}^1\pi_{*}\mu_{\ell})  \ar[r]& \H^1(k_v,{\bf R}^1\pi_{*}\mu_{\ell}/G_1) \\
 0 \ar[r] & \H^1(k,G_1)\ar[u] \ar[r]&\H^1(k,{\bf R}^1\pi_{*}\mu_{\ell})\ar[u] &  & 
  } 
\end{split}
\end{equation}
In particular, we see that the map 
\[ \H^1(k_v,G_1)/\H^1(\O_v,G_{1,v}) \to \H^1(k_v,{\bf R}^1\pi_{*}\mu_{\ell})/\H^1(\O_v,{\bf R}^1\pi_{v*}\mu_{\ell})\]
is injective. It therefore suffices to show that the map
\[  \H^1(k,G_1) \to \H^1(k_v,G_1)/\H^1(\O_v,G_{1,v}) \]
is non-zero, with $G_1\in \{\Z/p,\mu_p,\alpha_p\}$. Letting $\wp(x)=x^p-x$, we 
calculate $\H^1(k,G_1)$ in each of these cases:
\[ \H^1(k,\Z/p) = k/\wp(k),\quad \H^1(k,\mu_p)=k^\times/(k^\times)^p,\quad  \H^1(k,\alpha_p)=k/k^p.\]
There are similar descriptions with $k_v$ in place of $k$.

Note that the group $\H^1(k_v,G_1)$ has a natural locally compact topology (arising from the discrete valuation topology on $k_v$), with respect to which the subspace $\H^1(\O_v,G_{1,v})$ is compact.
The image of $\H^1(k,G_1)\to \H^1(k_v,G_1)$ is seen to be dense by the above explicit descriptions, so it is enough to show that $\H^1(\O_v,G_{1,v})\neq \H^1(k_v,G_{1})$.

If $G_1=\Z/p$ or $\alpha_p$, then this simply follows from the fact that $\H^1(k_v,G_{1})$ is non-compact. If $G_1=\mu_p$, then the Cartier dual $G_{1,v}^\vee$ has generic fibre $\Z/p$, and the generating section $1\in G_1(k_v)=\Z/p$ extends uniquely to an element of $G_{1,v}^\vee(\O_v)$, and hence to a morphism $\Z/p\to G_{1,v}^\vee$. Dualising gives a morphism $G_{1,v}\to \mu_p$ extending the isomorphism $G_1\cong \mu_p$ on generic fibres. Hence, by functoriality of cohomology, we can reduce to the case $G_{1,v}=\mu_p$. This can then be handled by direct computation. 
\end{proof}

For a finite field extension $L/k$ and a place $w$ of $L$ above $v$, we can also consider the natural map
$$\xi_{L,w}\colon \H^0(L,{\bf R}^2\pi_*\mu_{p})\to\H^0(L_w,{\bf R}^2\pi_*\mu_{p})/\H^0(\O_w,{\bf R}^2\pi_{v*}\mu_{p}),$$
where $\O_w$ is the ring of integers of $L_w$.

\ble \label{g3}
Assume that $U_X\neq 0$, and that ${\bf R}^2\pi_{v*}\mu_{p}$ is an affine group scheme of finite type over $\O_v$. Up to replacing $k$ by a finite field extension, there exists $a\in \H^0(k,{\bf R}^2\pi_{*}\mu_{p})$ such that for any finite field extension $L/k$, and any prime $w$ of $L$ above $v$, we have $\xi_{L,w}(a)\neq 0$.
\ele
\begin{proof}
It follows from Lemma \ref{lem: UX} that  
${\bf R}^2\pi_{*}\mu_{p}$ has positive dimension, moreover, it was shown during the proof that $\bar{k}$, ${\bf R}^2\pi_{*}\mu_{p}$ is a product of a finite group scheme and a unipotent group scheme. Thus, after a finite extension, using \cite[Theorem 16.13]{Mil17}, we can ensure that ${\bf R}^2\pi_*\mu_{p}$ contains
a smooth connected unipotent group scheme of positive dimension.
Passing to yet another finite extension, we can ensure that it is split and so
contains a copy of $\G_{a,k}$ \cite[Corollary 14.55]{Mil17}. 

Let $G_v$ be the scheme theoretic closure of $\G_{a,k_v}\subset ({\bf R}^2\pi_{*}\mu_{p})_{k_v}$
inside ${\bf R}^2\pi_{v*}\mu_p$.
This is an affine group scheme of finite type over $\O_v$. Denote by $f_v$ the natural map
$G_v(\O_v)\to G_v(k_v)=\G_{a}(k_v)=k_v$. For any pair $(L,w)$ as in the statement of the lemma, the map $f_v$ naturally extends to a map $f_w\colon G_v(\O_w)\to L_w$.

Since $G_v$ is of finite type, we may take a closed embedding $G_v\hookrightarrow \A^n_{\O_v}$.
Then the isomorphism $G_{v,{k_v}}\isomto \G_{a,{k_v}}$ extends to a map $g\colon \A^n_{k_v}\to \A^1_{k_v}$. If we let $c_v\in k_v$ denote the coefficient of largest absolute value in the polynomial defining $g$, then for every
finite field extension $L_w/k_v$ we have 
$f_w(G_v(\O_w))\subset g(\O_w^n) \subset c_v\O_w$.
If $a\in \G_a(k)=k$ is such that $a/c_v\notin \O_v$ (which exists by weak approximation), then the image of $a$
in $\G_a(k_v)=k_v$ is not in $G_v(\O_v)$,
so that $\xi_{k,v}(a)\neq 0$. In this case we have $a/c_v\notin\O_w$ for any $(L,w)$, so
the image of $a$ in $\G_a(L)=L$ maps to an element of
$\G_a(L_w)$ which is not in $G_v(\O_w)$, and $\xi_{L,w}(a)\neq 0$.
\end{proof}

\bthe \label{pot rel local} Assume that ${\bf R}^1\pi_{v*}\mu_\ell$ is a finite flat group scheme over $\O_v$, and ${\bf R}^2\pi_{v*}\mu_\ell$ is an affine group scheme of finite type over $\O_v$.
\begin{enumerate}[nosep, label=\normalfont(\roman*)]
 \item If ${\bf R}^1\pi_{v*}\mu_\ell\neq 0$, then there exists a finite extension $L/k$, a place $w$ of $L$ dividing $v$, and a class $\mathcal{A}\in \Br(X_L)[\ell]$ such that the evaluation map
\[ {\rm ev}_{\mathcal{A}}\colon X(L_w)\to \Br(L_w) \]
is non-constant.
\item If $\ell=p$, and $U_X\neq 0$, then the same conclusion holds.
\end{enumerate}
\ethe

\begin{proof} By Proposition \ref{local ev} it is enough to show that, up to possibly replacing $k$ by a finite extension, the map
$$ \Br (X)[\ell]\to \Br(X_{k_v})[\ell]/\big(\Br({k_v})[\ell]\oplus\Br(\X)[\ell]\big)$$
is non-zero. We consider diagram \eqref{11} and the exact sequence \eqref{eqn exact},
together with its analogues for $X_{k_v}$ and $\X$.

First suppose that $\ell=p$ and $U_X\neq 0$. By Lemma \ref{g3} we can find an element $x\in \H^0(k,{\bf R}^2\pi_*\mu_p)$
such that the image of $x$ in
$\H^0(L_w, {\bf R}^2\pi_*\mu_p)$ is not contained in
$\H^0(\O_w, {\bf R}^2\pi_{v*}\mu_p)$, for any finite field extension $L/k$ and any place $w$ of
$L$ above $v$. By \cite[Theorem 1.1]{Bha12}, any class in $\H^2(k, {\bf R}^1\pi_*\mu_p)$
is killed by a finite field extension of $k$. Thus we can choose $L$ such that $d(x)$ goes to
zero in $\H^2(L, {\bf R}^1\pi_*\mu_p)$, where $d$ is a map from (\ref{eqn exact}).
Then the image of $x$ in $\H^0(L,{\bf R}^2\pi_*\mu_{p^n})$ lifts to an element
$y\in\widetilde\H^2(X_L,\mu_p)$. 
We now rename $L$ by $k$ and $L_w$ by $k_v$.

Let $\mathcal A\in\Br(X)[p]$ be a lifting of $\si(y)$.
An inspection of diagram (\ref{11}) shows that the image of  
$\mathcal A$ in $\Br(X_{k_v})[p]$ is not contained in $\Br(\X_{\O_v})[p]\oplus\Br(k_v)[p]$, 
otherwise the image of $x$ in $\H^0(k_v, {\bf R}^2\pi_*\mu_p)$ would be in
$\H^0(\O_v, {\bf R}^2\pi_{v*}\mu_p)$, contrary to the choice of $x$.

Next, suppose that $\ell$ is arbitrary and ${\bf R}^1\pi_{v*}\mu_\ell\neq 0$. There is a commutative diagram with exact rows
\begin{equation} \label{R^1}
\begin{split}
\xymatrix@C=22pt{
0  \ar[r] & \H^1(\O_v,{\bf R}^1\pi_{v*}\mu_\ell) \ar[r] \ar@{^{(}->}[d] & 
 \H^2(\X_{\O_v},\mu_\ell)
\ar[r]\ar[d] & \H^0(\O_v,{\bf R}^2\pi_*\mu_\ell) \ar@{^{(}->}[d] \\
0  \ar[r] & \H^1(k_v,{\bf R}^1\pi_*\mu_\ell) \ar[r] &  \widetilde\H^2(X_{k_v},\mu_\ell)  \ar[r]&
 \H^0(k_v,{\bf R}^2\pi_*\mu_\ell) \\
0\ar[r] & \H^1(k,{\bf R}^1\pi_*\mu_\ell)\ar[u] \ar[r]&\widetilde\H^2(X,\mu_\ell)\ar[u] &  & 
  } 
\end{split}
\end{equation}
which combined with Lemma \ref{g4} gives an element
of $\widetilde\H^2(X,\mu_\ell)$ whose image in $\widetilde\H^2(X_{k_v},\mu_\ell)$
does not come from $\H^2(\X,\mu_\ell)$. By diagram (\ref{11}) we get an element
of $\Br(X)[\ell]$ whose image in $\Br(X_{k_v})[\ell]$ is not contained in $
\Br(\X)[\ell]\oplus \Br({k_v})[\ell]$, as required.
\end{proof}

\begin{proof}[Proof of Theorem \ref{theo: main}] Since the question is of potential relevance to the Brauer--Manin obstruction, we are free to extend $k$, and thus assume that $X(k)\neq \emptyset$. 

Let $\ell\neq p$. Then there exists a smooth curve $C$ with function field $k$, and a smooth projective morphism $\pi\colon \X\to C$ with generic fibre $X$. By standard theorems in \'etale cohomology, each ${\bf R}^q\pi_*\mu_{\ell^n}$ is a finite \'etale group scheme over $C$, and $({\bf R}^1\pi_*\mu_\ell)_k={\bf Pic}_{X/k}[\ell]$.

Let $v\in |C|$. If ${\bf Pic}_{X/k}[\ell]=0$, then $v$ is not potentially relevant to the $\ell$-primary Brauer--Manin obstruction for $X$ by Proposition \ref{cor: UX0}. If ${\bf Pic}_{X/k}[\ell]\neq 0$, then $v$ is potentially relevant to the $\ell$-primary Brauer--Manin obstruction for $X$ by Theorem \ref{pot rel local}.

Now let $\ell=p$, and take $\pi\colon \X\to C$ as above. If ${\bf Pic}_{X/k}[p]=0$, then by \cite[Theorem 1.4]{BO} we can shrink $C$ to ensure that 
${\bf R}^1\pi_{*}\mu_{p}=0$, and ${\bf R}^q\pi_{*}\mu_{p}$ is an affine group scheme of finite type over $C$ for $q=2,3$. By Lemma \ref{lem: UX}, if $U_X=0$, we may further shrink $C$ to ensure that ${\bf R}^2\pi_{*}\mu_{p}$ is a finite flat group scheme over $C$. Hence no place $v\in |C|$ is potentially relevant for the $p$-primary Brauer--Manin obstruction for $X$ by Proposition \ref{cor: UX0}.

Finally, assume that either ${\bf Pic}_{X/k}[p]\neq 0$ or $U_X\neq 0$. Then again applying \cite[Theorem 1.4]{BO} we can shrink $C$ to ensure that 
${\bf R}^1\pi_{*}\mu_{p}$ is a finite flat group scheme, and ${\bf R}^2\pi_{*}\mu_{p}$ by an affine group scheme of finite type. Hence every place $v\in|C|$ 
is potentially relevant to the $p$-primary Brauer--Manin obstruction for $X$ by Theorem \ref{pot rel local}.
\end{proof}

\brem{\rm \label{2.7}
Let $X$ be a smooth, projective, geometrically integral variety over a global field $k$
of {\em any} characteristic $p\geq 0$.
If ${\bf Pic}_{X_{\bar k}/\bar{k}, {\rm red}}^\circ\neq 0$, then one can show that
every non-archimedean 
place of $k$ is potentially relevant to the Brauer--Manin obstruction without using
the results of \cite{KM}.

Indeed, up to replacing $k$ by a finite field extension, we can assume that 
${\bf Pic}_{X/k, {\rm red}}^\circ$ is a non-zero abelian variety over $k$, and that $X$ has a $k$-point  $Q$. We let $A=({\bf Pic}_{X/k, {\rm red}}^\circ)^\vee\neq 0$ be the Albanese variety of $X$, and ${\rm alb}\colon X\to A$ the Albanese morphism, uniquely determined by the condition ${\rm alb}(Q)=0$.
By the universal property of the Albanese variety, $A$ is the smallest abelian variety containing
${\rm alb}(X)$. In particular, ${\rm alb}(X)\neq \{0\}$. Thus, up to further replacing $k$ by 
a finite field extension, we can assume that $X$ contains a $k$-point $P$ such that
$R:={\rm alb}(P)\neq 0$. 

Let $w$ be a place of $k$. 
Let $Z:=\{0,R\}\subset A$ be the closed subscheme which is the union of the $k$-points $0$ and $R$.
Let $x\in Z({\bf A}_k)$ be such that $x_w=R$ and $x_v=0$ if $v\neq w$.
By \cite[Proposition 3.6]{Sto} (for $p=0$) and
\cite[Proposition 3.1]{CV26} (for $p>0$) we have $x\notin A({\bf A}_k)^\Br$, thus there exists
an element $\mathcal{B}\in \Br(A)$ such that $\sum_v\inv_v(\mathcal{B}(x))\neq 0\in\Q/\Z$.
Then $\mathcal{A}:=\mathcal{B}-\mathcal{B}(0)\in\Br(A)$ satisfies
$\mathcal{A}(0)=0$ and $\inv_w(\mathcal{A}(R))\neq 0$, hence
${\rm alb}^*(\mathcal{A})\in \Br(X)$ has non-constant evaluation at $k_w$-points of $X$.}
\erem

If $\ell=p$, then the smooth curve $C$ required in the proof of Theorem \ref{theo: main} seems rather difficult to determine explicitly, but in particular cases we can say more. We discuss these cases in the next two sections.

\section{Constant varieties}\label{2.5}

In this section, we consider \emph{constant} varieties over $k$. We will therefore keep the same notation as in the previous section, except that now $X$ will be defined over the constant field $\F$ of $k$. In this case, the sheaves ${\bf R}^q\pi_{*}\mu_{p^n}$ are obtained via base change along $C\to\Spec\, \F$, hence in the proof of Theorem \ref{theo: main} we may take $C$ to be the unique complete smooth curve with function field $k$. Thus:

\bthe \label{a1}
Let $X$ be a smooth, projective, geometrically integral variety over a finite field $\F$, and $\ell$ a prime. Assume that ${\bf Pic}_{X/\F}[\ell]=0$, and, if $\ell=p$, that $U_X=0$. Then for any global function field $k/\F$, any place $v$ of $k$, and any $\mathcal{A} \in \Br(X_{k_v})[\ell^\infty]$, the evaluation map
\[ {\rm ev}_{\mathcal{A}} \colon X(k_v)\to \Q_\ell/\Z_\ell \]
is constant.
\ethe

\bco \label{cor: const zero} Let $X$ be a smooth, projective and geometrically integral
variety over a finite field $\F$, with torsion free Picard scheme and $U_X=0$. Then, for any global function field $k/\F$, the Brauer--Manin pairing
\[ X({\bf A}_k)\times \Br(X_k) \to \Q/\Z \]
is identically zero.
\eco

\begin{proof} By the Lang--Weil estimates, $X$ admits a $0$-cycle of degree $1$, and hence so does $X_k$. Write this $0$-cycle as $\sum_i n_i Q_i$ and let $k_i=k(Q_i)$. Thus $\sum_i n_i[k_{i}:k]=1$. 

Let $(P_v)_v\in \prod_v X(k_v)$ and let $\mathcal{A}\in \Br(X_k)$. 
For any place $w$ of $k_i$ over $v$ we have
\[ {\rm inv}_w\,\mathcal{A}(Q_i)={\rm inv}_w({\rm res}_{k_{i,w}/k_v}\,\mathcal{A}(P_v))=[k_{i,w}:k_v]\cdot{\rm inv}_v\,\mathcal{A}(P_v),\]
where the first equality is from Theorem \ref{a1}, and the second is \cite[Proposition 1.4.7]{CTS21}. Global reciprocity now gives 
$\sum_w {\rm inv}_w\,\mathcal{A}(Q_i)=0$ for all $i$, and hence 
\begin{align*} 
0 &=\sum_i n_i\sum_{w} {\rm inv}_w\,\mathcal{A}(Q_i)=\sum_i n_i\sum_v\sum_{w\mid v} [k_{i,w}:k_v]\cdot{\rm inv}_v\,\mathcal{A}(P_v)  \\
&= \left(\sum_i n_i[k_i:k]\right)\left(\sum_v{\rm inv}_v\,\mathcal{A}(P_v)\right) = \sum_v{\rm inv}_v\,\mathcal{A}(P_v),
\end{align*}
as required.
\end{proof}

\brem This corollary can be proved more directly. By Lemma \ref{a3},
when $U_X=0$ and $K/\F$ is algebraically closed, 
the group ${\Br}(X_{K})$ is independent of $K$. When ${\bf Pic}_{X/\F}$ is torsion-free, one can use this, together with the Hochschild--Serre spectral sequence, to show that the natural map
\[ \Br(X)\oplus \Br(k) \to \Br(X_k) \]
is an isomorphism. Any element coming from $\Br(k)$ pairs trivially with $X({\bf A}_k)$. 
Using the valuative criterion of properness, together with the fact that $\Br(\O_v)=0$ for all $v$, 
one shows that the same is true for any element of $\Br(X)$.
\erem

In the case that $U_X\neq 0$ we can, in the proof of Theorem \ref{theo: main}, again take $C$ to be the unique complete smooth curve with function field $k$. We then obtain the following result.

\bthe \label{theo: const UXn0}
Let $X$ be a smooth, projective, geometrically integral
variety over a finite field $\F$. Let $\ell$ be a prime. Assume that ${\bf Pic}_{X/\F}[\ell]\neq 0$, or $\ell=p$ and $U_X\neq 0$. Then for any global function field $k/\F$,
every place of $k$ is potentially relevant to the $\ell$-primary Brauer--Manin obstruction for $X_k$. 
\ethe

\section{\texorpdfstring{$\bm{p}$}{p}-torsion free geometric Brauer group} \label{t-free}

For varieties over a global function field
with torsion free Picard scheme and $p$-torsion free geometric Brauer group,
we can give an explicit sufficient condition for a prime not to be potentially relevant to the Brauer--Manin obstruction. We say that $X$ has {\em good reduction at} $v$ if there exists a smooth projective scheme over $\O_v$ with geometrically integral fibres and generic fibre $X_{k_v}$.

\bpr \label{a4}
Let $X$ be a smooth, projective and geometrically integral variety over a global function field $k$
of characteristic $p$, with torsion-free Picard scheme, and such that $\Br(X_{\bar k})[p^\infty]=0$.
Any place of $k$ where $X$ has good reduction 
is not potentially relevant to the Brauer--Manin obstruction.
\epr
\begin{proof}
Since ${\bf Pic}_{X/k}$ is \'etale, the natural map
$\Br(X_{k^\s})\to\Br(X_{\bar k})$ is injective, see 
\cite[Theorem 5.2.5 (ii)]{CTS21}, \cite[Corollary 3.4]{D'A}, so we have
$\Br(X_{k^\s})[p^\infty]=0$. We conclude that
$\Br(X)=\Br_1(X)+\Br(X)(p')$, where $\Br_1(X)$ is the kernel of 
the natural map $\Br(X)\to\Br(X_{k^\s})$ and $\Br(X)(p')$ is the subgroup of elements
of order prime to $p$. 

Let $v$ be a place of $k$ where $X$ has good reduction. 

For any $\mathcal A\in\Br(X)(p')$ the evaluation map 
${\rm ev}_{\mathcal A}\colon X(k_v)\to\Q/\Z$ 
is constant by Proposition \ref{cor: UX0} applied with $\ell\neq p$,
whose conditions are satisfied in this case. 

The same holds for any $\mathcal A\in \Br_1(X)$ by \cite[Proposition 2.3]{CTS13} or 
\cite[Lemma 10.4.1, Proposition 10.4.2]{CTS21}. In {\em loc.~cit.}~these 
results are stated under the assumption ${\rm char}(k)=0$, 
but the same proof works in any characteristic.
We sketch it for the convenience of the reader. 
Let us prove that every element of 
$\Br_1(X_{k_v}):= {\rm ker}(\Br(X_{k_v})\to \Br(X_{k_v^\s}))$
has constant evaluation on $X(k_v)$.

Let $k_v^\nr\subset k_v^\s$ be the maximal unramified extension of $k_v$.
Let $I=\Gal(k_v^\s/k_v^\nr)\subset\Gal(k_v^\s/k_v)$ be the inertia subgroup. 
%, so that $\Gal(k_v^\s/k_v)/I=\Gal(k_v^\nr/k_v)=\Gal(\ov\F_v/\F_v)$.
Choose a prime $\ell\neq p$. The smooth base change theorem implies that the 
restriction of the natural action of $\Gal(k_v^\s/k_v)$ on $\H^2_\et(X_{k_v^\s},\Z_\ell(1))$
to $I$ is trivial. By assumption, $\Pic(X_{k_v^\s})\cong\Pic(X_{\ov k_v})$
is a finitely generated free abelian group, so the Kummer
exact sequence gives a Galois-equivariant embedding of $\Pic(X_{k_v^\s})$ into
$\H^2_\et(X_{k_v^\s},\Z_\ell(1))$, hence the natural action of $I$ on $\Pic(X_{k_v^\s})$
is trivial. This implies $\H^1(I,\Pic(X_{k_v^\s}))=0$. The Hochschild--Serre spectral sequence
$$\H^p(I,\H^q(X_{k_v^\s},\G_m))\Rightarrow \H^{p+q}(X_{k_v^\nr},\G_m)$$
gives the exact sequence
$$\Br(k_v^\nr)\to {\rm ker}(\Br(X_{k_v^\nr})\to \Br(X_{k_v^\s}))\to \H^1(I,\Pic(X_{k_v^\s}))=0.$$
Since $\Br(k_v^\nr)=0$ by \cite[Theorem 1.2.15(ii)]{CTS21}, we conclude that 
$\Br_1(X_{k_v})$ equals ${\rm ker}(\Br(X_{k_v})\to \Br(X_{k_v^\nr}))$.
By \cite[Lemma 10.4.1]{CTS21} (whose proof works {\em verbatim}), every element of
this group can be written as a sum of an element of $\Br(k_v)$ and an element of the Brauer
group of a smooth proper model of $X_{k_v}$ over $\O_v\subset k_v$.
This implies the constancy of the evaluation map.

These arguments can be used after any finite extension of $k$, thus proving the proposition.
\end{proof}

For surfaces, there is the following explicit criterion for the triviality of the geometric Brauer group.

\bpr \label{p1}
Let $X$ be a smooth, projective and geometrically integral surface over a field $k$ of characteristic $p$, such that ${\bf Pic}_{X/k}$ is smooth and $U_X=0$.
If $b_2=\rho+2h$ and $\NS(X_{\bar k})[p^\infty]=0$, then $\Br(X_{\bar k})[p^\infty]=0$.
\epr
\begin{proof}
By Lemma \ref{a2} the condition $b_2=\rho+2h$ implies that the divisible subgroup of
$\Br(X_{\bar k})[p^\infty]$ is trivial. Since $X$ is a surface, 
$\NS(X_{\bar k})[p^\infty]=0$ implies that $D_X=0$
by \cite[Theorem 1.2 (1)]{YY}. But $U_X=0$, so we have $\Br(X_{\bar k})[p^\infty]=0$. 
\end{proof}

Let us now discuss the case of K3 surfaces $X/k$. 
In this case, the Picard scheme ${\bf Pic}_{X/k}$ is torsion-free, and we have
$U_X=0$ if and only if $h>0$, which is equivalent
to  $X$ being non-supersingular. In this case $\Br(X_{\bar k})$ is divisible \cite[\S 7.2]{Ill}, so
to conclude that $\Br(X_{\bar k})[p^\infty]=0$ we only need to ensure that $\rho+2h=22$
and $h>0$.

\begin{exa} \label{exas}
\begin{enumerate}[nosep, label=(\roman*)]
\item Kazuhiro Ito proved that for $p\geq 5$ there exist K3 surfaces over $\ov\F_p$
with arbitrary positive integer values of $\rho$ and $h$ subject to the conditions that 
$\rho$ is even and $\rho+2h\leq 22$,
see \cite[Theorem 6.4]{Ito}. In particular, for any $h$ between 1 and 10
there are K3 surfaces over $\F_{p^n}$, for some $n\geq 1$, with $\rho=22-2h$.
\item %We note that ordinary elliptic curves exist over all finite fields.%AS is there a reference for this?
Let $E$ be an ordinary elliptic curve over $k=\F_p$, where $p$ is a prime
(including $p=2$). The Kummer surface $X=\Kum(E\times E)$ is defined as the minimal
desingularisation of the quotient of the abelian surface $A=E\times E$ by the involution 
$x\mapsto -x$. It is a K3 surface, so $b_2=22$. (This also holds when $p=2$ since 
$A$ is ordinary, see \cite{Kat}). We have
$\End(E_{\bar k})\cong\Z^{\oplus 2}$, hence the rank of $\NS(A_{\bar k})$ is 4.
The Picard rank of $X_{\bar k}$ is $\rho=16+4=20$ and $h_X=h_A=1$, see \cite[Remark 6.5]{LS23}.
\item \label{iii} Let $k$ be any field of positive characteristic $p$ (including $p=2$).
Let $E$ and $E'$ be elliptic curves over $k$ such that $E$ is ordinary and 
$E'$ is supersingular. Since 
$\Hom(E_{\bar k},E'_{\bar k})=0$, the rank of $\NS(A_{\bar k})$ is 2.
The Kummer surface $X=\Kum(E\times E')$ is defined in the same way as in (ii).
This is a K3 surface of Picard rank $\rho=16+2=18$ (again, see \cite{Kat} for the case $p=2$). We  have $h_X=h_A=2$ again by \cite[Remark 6.5]{LS23}.
\end{enumerate}
\end{exa}

Recall that a variety $X$ over a global function field $k$ is called {\em isotrivial} if there exist a finite extension $L/k$, a finite field $\F\subset L$, and a variety $X_0/\F$ such that $X_L \cong X_{0,L}$. 
Given the results of \S\ref{2.5}, Proposition \ref{a4} is only interesting in the non-isotrivial case.

If $k$ is a global function field, with fixed separable closure $k^{\rm s}$, we let $\overline{\F}$ denote the algebraic closure of the field of constants of $k$ inside $k^{\rm s}$, and define the {\em geometric Galois group} of $k$ to be $\Gal(k^\s/\overline{\F}k)$. If $X/k$ is isotrivial, then the action of the geometric Galois group of $k$ on any \'etale cohomology group of $X_{k^\s}$ has to be through a finite quotient. The following proposition shows that there exist non-isotrivial
varieties with torsion free Picard scheme and $p$-torsion free geometric Brauer group.

\bpr \label{a5}
In the situation of Example \ref{exas}\ref{iii}, assume that $E$ has non-constant $j$-invariant. Then $X$ is non-isotrivial. 
\epr
\begin{proof} We shall prove that, for any prime $\ell\neq p$, the geometric Galois group of $k$ does not act on $\H^2_\et(X_{k^{\rm s}},\Q_\ell)$ through a finite quotient. Indeed, since $E\times E'$ is non-supersingular, the standard computation of the cohomology groups of a Kummer surface shows that $\H^2_\et(X_{k^{\rm s}},\Q_\ell)$ contains $\H^2_\et((E\times E')_{k^{\rm s}},\Q_\ell)$, and thus $\H^1_\et(E_{k^{\rm s}},\Q_\ell)\otimes \H^1_\et(E'_{k^{\rm s}},\Q_\ell)$, as a direct summand. Since $E'$ is supersingular, it is isotrivial, and so the geometric Galois group of $k$ acts on $\H^1_\et(E'_{k^{\rm s}},\Q_\ell)$ through a finite quotient. After replacing $k$ by a finite separable extension, we may thus assume that the geometric Galois group acts trivially on $\H^1_\et(E'_{k^{\rm s}},\Q_\ell)$. It is therefore enough to show that the geometric Galois group of $k$ does \emph{not} act on $\H^1_\et(E_{k^{\rm s}},\Q_\ell)$ through a finite quotient. 

To see this, the fact that the $j$-invariant of $E$ is non-constant means there is a place $v$ of $k$ at which it is non-integral, and hence at which $E$ does not have potential good reduction. In particular, the N\'eron--Ogg--Shafarevich criterion shows that the inertia group at $v$ cannot act on $\H^1_\et(E_{k^{\rm s}},\Q_\ell)$ through a finite quotient. Since this inertia group is contained within the geometric Galois group of $k$, we are done.
\end{proof}

\section{The group scheme \texorpdfstring{${\bf R}^2\pi_*\mu_{p}$}{R2pi*mup}} \label{5}

In this section we give an example of a {\em supersingular}
K3 surface $X$ over a global function field $k$ of characteristic $p> 2$
such that $\Br(X)/\Br_0(X)$ is finite. This answers a question raised by the authors
in \cite[Introduction]{LS24}. Moreover, the finiteness of $\Br(X)/\Br_0(X)$ implies that
the Brauer--Manin set $X({\bf A}_k)^\Br$ is open in $X({\bf A}_k)$, in particular, 
only finitely many primes of $k$ are relevant to the Brauer--Manin
obstruction on $X$. This shows that in general one cannot drop `potentially' from
`potentially relevant' in the case $U_X\neq 0$ of Corollary~B.
This also shows that in general a field extension in \cite[Theorem 1.3]{V} is necessary.
The key feature of the example is that the connected component of ${\bf R}^2\pi_*\mu_p$
is a non-split form of $\G_a$, which for $p>2$ implies the finiteness of 
$\H^0(k,{\bf R}^2\pi_*\mu_p)$.

\ble \label{last}
Let $\pi\colon X\to\Spec(k)$ be a K3 surface over a finitely generated field $k$ of characteristic $p>0$.
Then $\Br(X)/\Br_0(X)$ is finite if and only if $\H^0(k,{\bf R}^2\pi_*\mu_p)$ is finite.
\ele
\begin{proof}
The finiteness of the prime-to-$p$ torsion subgroup 
$\big(\Br(X)/\Br_0(X)\big)(p')$ is established in \cite[Theorem 1.3]{SZ15} (for $p\neq 2$)
and in \cite{Ito18} (for $p=2$). The finiteness
of $\big(\Br(X)/\Br_0(X)\big)[p^\infty]$ is known when $X$ is not supersingular,
in which case ${\bf R}^2\pi_*\mu_p$
is a finite flat group scheme over $k$, see \cite[Theorem A]{LS24} or \cite{LQ24}.

We now consider $X$ supersingular. As usual, we define $\Br_1(X)$ to be the kernel of the map $\Br(X)\to\Br(X_{k^\s})$.
The natural map $\Br(X_{k_\s})\to\Br(X_{\bar k})$ is injective,
see \cite[Theorem 5.2.5 (i)]{CTS21} and \cite[Corollary 3.4]{D'A}, so
$\Br_1(X)$ is also the kernel of the map $\Br(X)\to\Br(X_{\bar k})$.
The Hochschild--Serre spectral sequence
$$\H^p(k,\H^q(X_{k^\s},\G_m))\Rightarrow\H^{p+q}(X,\G_m)$$
shows that $\Br_1(X)/\Br_0(X)$ is a subgroup of $\H^1(k,\Pic(X_{k^\s}))$, so it is finite
since $\Pic(X_{k^\s})$ is finitely generated and torsion-free. Since $X$ is a supersingular K3 surface, we have 
$p\Br(X_{\bar k})=0$, hence $\Br(X)/\Br_0(X)$ has finite exponent. 

The finiteness of $\big(\Br(X)/\Br_0(X)\big)[p^\infty]$ is thus equivalent to
the finiteness of $\big(\Br(X)/\Br_0(X)\big)[p]$.
Since $\Br(k)$ is $p$-divisible
\cite[Theorem 1.3.7]{CTS21}, the group $\Br_0(X)$ is $p$-divisible too. Thus we have
$$\big(\Br(X)/\Br_0(X)\big)[p]\cong \Br(X)[p]/\Br_0(X)[p].$$
The Kummer exact sequences for $k$ and $X$ give an exact sequence
$$\Pic(X)/p\to\widetilde\H^2(X,\mu_{p})\to\Br(X)[p]/\Br_0(X)[p]\to 0,$$
where $\widetilde\H^2(X,\mu_{p})$ is the cokernel of the natural map
$\H^2(k,\mu_{p})\to\H^2(X,\mu_{p})$.
Since $\Pic(X)/p$ is finite by the Lang--N\'eron theorem, the finiteness of
$\Br(X)[p]/\Br_0(X)[p]$ is equivalent to the finiteness of $\widetilde\H^2(X,\mu_{p})$.

Since $X$ is a K3 surface we have $\pi_*\mu_{p}=\mu_{p}$ and 
${\bf R}^1\pi_*\mu_{p}=0$. We also note that $\H^3(k,\mu_p)=0$, indeed if we let $\nu_p$ denote the cokernel of the map of \'etale sheaves $\G_m\overset{\times p}{\longrightarrow}\G_m$, then $\H^3(k,\mu_p)\cong \H^2_\et(k,\nu_p)=0$ since fields of characteristic $p$ have $p$-cohomological dimension $1$.  Using this, the Leray spectral sequence 
$\H^p(k,{\bf R}^q\pi_*\mu_{p})\Rightarrow \H^{p+q}(X,\mu_{p})$
gives an isomorphism $\widetilde\H^2(X,\mu_{p})\cong\H^0(k,{\bf R}^2\pi_*\mu_{p})$, and we may conclude.
\end{proof}

From now on we assume that $p\neq 2$. To explain our example, we need to recall some results of Ogus \cite{Ogu79, Ogu83} and of Bragg and Lieblich \cite{BL18} on the moduli space of supersingular K3 surfaces.

Recall that a {\em supersingular K3 lattice} is a lattice $\Lambda$ of rank 22
and signature $(1,21)$, such that
${\rm disc}(\Lambda\otimes\Q)=-1\in\Q^\times/\Q^{\times2}$ and $p(\Lambda^*/\Lambda)=0$.
The discriminant of $\Lambda$ is equal to $-p^{2\si}$, where $1\leq \si\leq 10$. 
The number $\si$ is called the {\em Artin invariant} of $\Lambda$. It is known that
$\si$ determines the isomorphism class of $\Lambda$. If $X$ is a K3 surface over an algebraically closed field, then a $\Lambda$-marking of $X$ is a homomorphism of lattices $\Lambda\to {\rm Pic}(X)$, where ${\rm Pic}(X)$ is endowed with the intersection pairing.

Let $S_{\Lambda}$ denote the moduli space over $\overline{\F}_p$ of 
$\Lambda$-marked supersingular K3 surfaces.
This is an algebraic space of finite type, locally separated and smooth of dimension $\si-1$, by 
\cite[Theorem 2.7]{Ogu83}. We write
$\varpi\colon Y\to S_\Lambda$ for the universal family of $\Lambda$-marked supersingular K3 surfaces.

Define $\Lambda_0=p\Lambda^*/p\Lambda$. This is an $\F_p$-vector space of dimension $2\si$
equipped with a non-degenerate symmetric bilinear form induced by the form on $\Lambda$.
This form is `non-neutral' in the sense that there are no
isotropic subspaces of dimension $\si$; such a bilinear form is unique up to isometry.

Let $k$ be a field of characteristic $p$. 
Let $F^*\colon \Lambda_0\otimes_{\F_p}k\to \Lambda_0\otimes_{\F_p}k$ be the 
Frobenius homomorphism sending $\lambda \otimes x$ to $\lambda\otimes x^p$. A subspace
$K\subset \Lambda_0\otimes_{\F_p}k$ is called {\em characteristic} if $K$ is isotropic, 
$\dim_k(K)=\si$ and $\dim_k(K+F^*K)=\si+1$. 
%The {\em Artin invariant} of a characteristic subspace $K$ is defined as $\si-\dim_{\F_p}(K\cap\Lambda_0)$.
The moduli space $M_{\Lambda_0}$ of characteristic subspaces of $\Lambda_0\otimes_{\F_p} \G_a$
is a smooth and projective variety over $\F_p$ of dimension $\si-1$, see \cite[\S4]{Ogu79}. 

A characteristic space $K$ is {\em strictly characteristic} if $K\cap\Lambda_0=0$. Let $U\subset
M_{\Lambda_0}$ be the dense open subset parameterising strictly characteristic subspaces.

Let $\mathscr{U}\to M_{\Lambda_0}$ denote the group scheme given by \cite[Definition 3.1.10]{BL18}. For a characteristic space $K$ over $k$, 
the fibre of $\mathscr U$ at $K$ is the kernel of the homomorphism 
\begin{equation}
(\Lambda_{0}\otimes_{\F_p}\G_a)/(K\otimes_k\G_a)\xrightarrow{1-F^*} 
(\Lambda_{0}\otimes_{\F_p}\G_a)/((K+F^*K)\otimes_k\G_a). \label{formula}
\end{equation}
There is a period morphism $\rho\colon S_\Lambda\to \overline{M}_{\Lambda_0}$,
where $\overline{M}_{\Lambda_0}:=M_{\Lambda_0}\times_{\F_p}\ov\F_p$, as defined in \cite[\S3]{Ogu83}. It follows from \cite[Propositions 3.1.11 and 3.5.9]{BL18} that 
$({\bf R}^2\varpi_*\mu_p)^\circ=\rho^*\mathscr{U}$.

The results of \cite{BL18} now give an upper bound on the Frobenius twist required to split $({\bf R}^2\pi_*\mu_p)^\circ$, for a general supersingular K3 surface. If $G$ is a group scheme in positive characteristic, we write $G^{(n)}$ for the $n$-fold Frobenius twist of $G$.

\bpr[Bragg--Lieblich] \label{prop: BL}
Let $k$ be a field of characteristic $p>2$, and let
$\pi\colon X\to \Spec\, k$ be a supersingular K3 surface of Artin invariant $\sigma$. 
Then $({\bf R}^2\pi_*\mu_p)^{\circ,(\sigma-1)}\cong \G_{a,k}$. 
\epr
\begin{proof}
There exists a finitely generated subfield $k_0\subset k$ over which $X$ is defined. Replacing $k$ by the compositum $k_0\overline{\F}_p$ we may assume that $k$ is finitely generated over $\overline{\F}_p$.
Passing to a finite separable extension of $k$ we may assume that ${\rm Pic}(X_{\bar k})={\rm Pic}(X)$.

Set $\Lambda:={\rm Pic}(X)$. By spreading out, we may assume that there exists an integral $\overline{\F}_p$-scheme $S$ of finite type with function field $k$, a relative K3 surface $\mathcal{X}\to S$ such that $\mathcal{X}_k=X$, and a marking $\Lambda_S\to {\bf Pic}_{\mathcal{X}/S}$ extending the isomorphism $\Lambda = {\rm Pic}(X)$. 
The marking of $\mathcal{X}$ gives rise to a morphism $S\to S_\Lambda$, so that
${\bf R}^2\pi_*\mu_p$ is the pullback of ${\bf R}^2\varpi_*\mu_p$ along the induced map $\Spec\,k \to S_\Lambda$.

Since $\Lambda={\rm Pic}(X)$, the composition with the period map $\rho$ gives rise to a map $\chi \colon \Spec\, k\to \overline{M}_{\Lambda_0}$, which lands in the open subspace 
$U\subset \overline{M}_{\Lambda_0}$of strictly characteristic subspaces. Since $({\bf R}^2\pi_*\mu_p)^\circ\cong \chi^*\mathscr{U}$, we now conclude using the fact that $\mathscr{U}_U^{(\sigma-1)}\cong \G_{a,U}$, as proved in \cite[Lemma 3.1.14]{BL18}.  
\end{proof}

Recall that the $a$-number of a supersingular abelian variety $A$ over $k$ is defined to be
$\dim_k\Hom(\alpha_{p},A[p])$. By passing to the Kummer surface of a supersingular abelian surface, we obtain the following corollary.

\bco \label{cococo}
Let $k$ be a field of characteristic $p>2$. Let $\pi_A\colon A\to \Spec\, k$ be a
supersingular abelian surface of $a$-number $a$.
Then $({\bf R}^2\pi_{A*}\mu_p)^{\circ,(2-a)}\cong \G_{a,k}$. 
\eco

\begin{proof} 
Let $\pi_Y \colon Y \to \Spec\,k$ be the blowup of $A$ along the closed subscheme
$A[2]$, which is finite \'etale over $k$. We denote by $\iota$
the involution on $Y$ that extends the involution $[-1]\colon A\to A$. 
The Kummer surface $X=\Kum(A)$ attached to $A$ is by definition the quotient 
$\pi_X\colon Y/\iota\to \Spec\,k$. 
The birational morphism $Y\to A$ and the double cover $Y\to X$ 
induce homomorphisms of finite type affine group schemes 
$${\bf R}^2\pi_{A*}\mu_p\to {\bf R}^2\pi_{Y*}\mu_p \leftarrow {\bf R}^2\pi_{X*}\mu_p.$$
These three group schemes are smooth. Indeed,
$A$ (respectively, $X$) is supersingular, thus 
$({\bf R}^2\pi_{A*}\mu_p)^\circ$ (respectively, $({\bf R}^2\pi_{X*}\mu_p)^\circ$)
is a form of $\G_{a,k}$ and so is smooth. The group scheme ${\bf R}^2\pi_{Y*}\mu_p$ is the
direct sum of ${\bf R}^2\pi_{A*}\mu_p$ and a finite \'etale group $k$-scheme $f_*(\Z/p)$,
where $f\colon A[2]\to\Spec\, k$ is the structure morphism, so it is also smooth.

We claim that the induced maps of connected components
$$({\bf R}^2\pi_{A*}\mu_p)^\circ\tilde\lra ({\bf R}^2\pi_{Y*}\mu_p)^\circ \tilde\longleftarrow 
({\bf R}^2\pi_{X*}\mu_p)^\circ$$
are isomorphisms. This is clear for $Y\to A$, so we consider $Y\to X$.
%Using \cite[Corollary 6.2.11]{CTS21} we see that the induced map on $\bar k$-points is injective with finite cokernel.
%Since $A$ is supersingular, ${\bf R}^2\pi_{A*}\mu_p$ is smooth. 
%We deduce that $({\bf R}^2\pi_{A*}\mu_p)^\circ \to ({\bf R}^2\pi_{Y*}\mu_p)^\circ$ is injective, with finite, connected cokernel. In particular, we get an isomorphism
%$({\bf R}^2\pi_{A*}\mu_p)^\circ\tilde\lra ({\bf R}^2\pi_{Y*}\mu_p)^\circ_{\rm red}$, and the latter is in fact a subgroup of  $({\bf R}^2\pi_{Y*}\mu_p)^{\circ}$.
The Kummer exact sequence gives a commutative diagram with exact rows
$$\xymatrix{0\ar[r]&\NS(Y_{\bar k})/p\ar[r]&\H^2(Y_{\bar k},\mu_p)\ar[r]&
\Br(Y_{\bar k})[p]\ar[r] & 0\\
0\ar[r]&\NS(X_{\bar k})/p\ar[r]\ar[u]&\H^2(X_{\bar k},\mu_p)\ar[r]\ar[u]&
\Br(X_{\bar k})[p]\ar[r]\ar[u]^\cong & 0}$$
By \cite[Proposition 4.1]{Sk} the right-hand vertical map is an isomorphism.
The left-hand vertical map is the reduction mod $p$ of the middle vertical arrow in the following commutative diagram with exact rows:
$$\xymatrix{0\ar[r]&\Z^{16}\ar[r]&\NS(Y_{\bar k})\ar[r]&
\NS(A_{\bar k})\ar[r] & 0\\
0\ar[r]&\Pi\ar[r]\ar[u]&\NS(X_{\bar k})\ar[r]\ar[u]&
\NS(A_{\bar k})\ar[r]\ar[u]^{\rm id} & 0}$$
where $\Pi$ is defined by the diagram. Counting ranks, we see that 
$\Pi$ is a free abelian group of rank 16.
Moreover, the 16 lines on $X_{\bar k}$ generate a subgroup $\Z^{16}\subset\Pi$ such that
the composition $\Z^{16}\to \Pi\to\Z^{16}$ is multiplication by $2$. Thus the reductions mod
$p$ of the vertical maps in the last diagram are isomorphisms. We deduce that
$\H^2(X_{\bar k},\mu_p)\to \H^2(Y_{\bar k},\mu_p)$ is an isomorphism.
This implies that the map of group schemes
${\bf R}^2\pi_{X*}\mu_p\to {\bf R}^2\pi_{Y*}\mu_p$ induces an isomorphism on
$\bar k$-points. Since both groups are smooth, the map must be an isomorphism,
so the claim is proved. It remains to observe that $X$ has Artin invariant $3-a$
and apply Proposition \ref{prop: BL}.
\end{proof}

For $\sigma=1$, Proposition \ref{prop: BL} is easy to check:
in this case we have $M_{\Lambda_0}\cong \Spec\, \F_{p^2}$ by \cite[Example 4.7]{Ogu79}, hence
(\ref{formula}) gives $({\bf R}^2\pi_*\mu_p)^\circ\cong\G_{a,k}$ for any supersingular K3 surface
over $k$ of Artin invariant 1.

Now consider the case $\sigma=2$. Then $M_{\Lambda_0}\cong \P^1_{\F_{p^2}}$ again by \cite[Example 4.7]{Ogu79}, and we can explicitly calculate the group scheme $\mathscr{U}$. In particular:

\bpr \label{5.3}
Let $p>2$. There exists a global function field $k$ of characteristic $p$
and a supersingular K3 surface $\pi\colon X\to\Spec\, k$ of Artin invariant $2$, 
such that the group $k$-scheme $({\bf R}^2\pi_*\mu_p)^\circ$ is a non-split form of $\G_{a}$, 
the group $\H^0(k,{\bf R}^2\pi_*\mu_p)$ is finite, and consequently $\Br(X)/\Br_0(X)$ is
finite. 
\epr

\begin{proof}
Let $\Lambda$ be a K3 lattice of Artin invariant $2$, and let $V=\Lambda_0$. Thus $V$ is a non-neutral quadratic space over $\F_p$ of dimension $4$, and thus the direct sum of a hyperbolic plane and the quadratic space corresponding to the form $Q(z,w)=z^2-aw^2$ for some $a\in \F_p^\times\setminus (\F_p^\times)^2$. 

We first explicitly calculate the generic fibre $\mathscr{U}_\eta$ of the group scheme $\mathscr{U}\to M_V$. To do so, we base change $V$ to $\F_{p^2}$, and choose $b\in \F_{p^2}$ such that $b^2=a$. Then setting $u=z-bw$, $v=z+bw$, we have
\[ Q_{V}(x,y,u,v)=2(xy+uv) \]
and the natural Frobenius $F^*$ on $V$ is given by $F^*(x,y,u,v)=(x^p,y^p,v^p,u^p)$. The space of isotropic vectors in $V$ is the quadric
\[ V(xy+uv)\subset \P^3 \]
which is isomorphic to $\P^1\times \P^1$, and the space of lines $\ell\subset V(xy+uv)$ is then just $\P^1\sqcup\P^1$, with the two families being given explicitly by
\begin{align*} u&=tx,\;\;v=-t^{-1}y,\\
u&=ty,\;\;v=-t^{-1}x, \end{align*}
where $t$ is a co-ordinate on $\P^1$. 

Hence one of the two generic characteristic subspaces $K\subset V\otimes\F_{p^2}(t)$ can be described as the linear span of the vectors $(1,0,t,0)$ and $(0,1,0,-t^{-1})$, in which case $K+F^*K$ is spanned by $K$ together with the vector $(0,0,-t,t^p)$. In particular, the kernel of 
\[ 1-F^*\colon  \frac{V\otimes\F_{p^2}(t)}{K}\to \frac{V\otimes\F_{p^2}(t)}{K+F^*K} \]
is the closed subscheme $\mathscr{U}_\eta\subset\G^2_{a}$ defined by the equation
\[ t^{p-1}(u-v^p)+v-u^p=0, \]
where $u,v$ are co-ordinates on $\G_{a}^2$. A linear change of co-ordinates transforms this into
\[ v^p=u-(t^{p^2}-t)^{p-1}u^p.\] 
Thus $\mathscr{U}_\eta$ is a non-split form of $\G_{a}$ since 
$-(t^{p^2}-t)^{p-1}$ is not a $p$th power in $\F_{p^2}(t)$, by 
\cite[Theorem 2.1, Corollary 2.3.1]{Rus}.

We now let $L$ denote the function field of an integral component of $S_\Lambda$, and $\varpi_L\colon Y_L\to\Spec\,L$ 
the pullback of the universal family to $L$. Since $\rho\colon S_\Lambda\to \overline{M}_V$ is \'etale, there exists a finite field extension $\iota\colon \overline{\F}_p(t)\to L$ such that 
$({\bf R}\varpi_{L*}\mu_p)^\circ\cong \iota^*\mathscr{U}_{\eta}$. 
Now pick a global function field $k\subset L$ such that $Y_L$ descends to a K3 surface $\pi\colon X\to \Spec\,k$, and $\iota$ to a morphism $\F_{q}(t)\to k$ for some $q$. Thus $({\bf R}\pi_*\mu_p)^\circ$ is non-split over the compositum $\overline{\F}_p k$, and hence non-split over $k$.

Since $({\bf R}^2\pi_*\mu_p)^\circ$ is non-split it
does not contain a copy of $\G_{a}$ and hence is {\em wound} in the sense of \cite[V, \S 5]{Oes}.
The condition $p>2$ then implies that $\H^0(k,({\bf R}^2\pi_*\mu_p)^\circ)$ is finite by 
\cite[VI, \S 3.1, Th\'eor\`eme, p.~65]{Oes}, thus
$\H^0(k,{\bf R}^2\pi_*\mu_p)$ is also finite. It remains to apply Lemma \ref{last}.
\end{proof}

\brem 
Up to a finite separable extension of $k$, the K3 surface $X$ constructed above is isomorphic to 
${\rm Kum}(A)$, where 
$\pi_A\colon A\to\Spec\,k$ is a supersingular abelian surface of $a$-number $1$, $({\bf R}^2\pi_{A*}\mu_p)^\circ$ is a non-split form of $\G_{a}$, and $\Br(A)/\Br_1(A)$ is finite.
Indeed, over an algebraically closed field, supersingular Kummer surfaces are exactly those of Artin invariant $\sigma\leq 2$ by \cite[Theorem 7.10]{Ogu79}. Since ${\bf Pic}_{X/k}$ is \'etale, we can 
replace $k$ by a finite separable extension in order to ensure that all $16$ smooth rational curves  
$\{C_i\}$ coming from the Kummer structure are $k$-rational. Since $\sum_i C_i\in 2{\rm Pic}(X)$ we then get a double cover $Y\to X$ ramified along the $C_i$. Blowing down the inverse images of the $C_i$  gives the required abelian surface over $k$.
\erem

\bigskip

\noindent{\Large {\bf Acknowledgements}}

\medskip

\noindent A.S.\ is grateful to Subhadip Majumder and Margherita Pagano
for helpful discussions. He thanks
the Lodha Mathematical Sciences Institute in Mumbai and the Max Planck
Institute for Mathematics in Bonn for excellent working conditions and support.
C.L.\ would like to thank the Max Planck Institute for Mathematics in Bonn for hospitality.

\bigskip

\noindent Department of Mathematics, Harrison Building, University of Exeter, EX4 4QF, United Kingdom

\medskip

\noindent \texttt{c.d.lazda@exeter.ac.uk}

\bigskip

\noindent Department of Mathematics, South Kensington Campus, Imperial College London,
SW7 2AZ, United Kingdom and Institute for the Information Transmission Problems, Russian Academy of Sciences, Moscow, 127994, Russia

\medskip

\noindent \texttt{a.skorobogatov@imperial.ac.uk}

\end{document}